\documentclass[11pt]{article}
\usepackage[a4paper,margin=1in]{geometry}
\usepackage[T1]{fontenc}
\usepackage[utf8]{inputenc}
\usepackage{lmodern,microtype}
\usepackage{amsmath,amssymb,amsthm,mathtools,bm,booktabs,array,enumitem}
\usepackage[hidelinks]{hyperref}
\usepackage[nameinlink,capitalize]{cleveref}
\newtheorem{theorem}{Theorem}[section]
\newtheorem{proposition}[theorem]{Proposition}
\newtheorem{lemma}[theorem]{Lemma}
\newtheorem{corollary}[theorem]{Corollary}
\theoremstyle{definition}
\theoremstyle{remark}\newtheorem{remark}[theorem]{Remark}
\newcommand{\R}{\mathbb R}
\newcommand{\dd}{\,d}
\newcommand{\Dt}{D_t}
\newcommand{\muThree}{\mu_3}
\newcommand{\muFive}{\mu_5}
\newcommand{\nuGam}{\nu_\Gamma}
\title{\textbf{Axis Regularity in the 5D Corridor}}
\author{Rishad Shahmurov\\\small Cellular Products Research and Development, Roswell, GA, USA}
\date{September 2026}
\begin{document}
\maketitle

\begin{abstract}
We study axisymmetric Navier--Stokes flow with swirl through the five-dimensional lift of the azimuthal-vorticity equation.  We prove the weighted identities behind the corridor method: zero capacity of the axis, a sign-safe Hardy inequality, the exact circulation Caccioppoli formula, five-dimensional velocity recovery, a logarithmic source-mass estimate, and negative-norm compactness of the swirl source.  These estimates yield a precise conditional reduction from axis regularity to a finite set of scale-uniform closure statements.  We also prove that a natural weighted quartic inequality cannot provide that closure in the proposed corridor.  The paper is a rigorous reduction theorem; it does not prove unconditional regularity for arbitrary swirl.
\end{abstract}

\section{Introduction}
Axisymmetric Navier--Stokes flow without swirl is globally regular; the classical theory goes back to Ladyzhenskaya and Ukhovskii--Yudovich and admits later streamlined treatments such as \cite{Leonardi1999}.  With swirl, the problem is substantially harder.  Scale-invariant component criteria and vorticity estimates have been developed by Chae--Lee \cite{ChaeLee2002}, Chen--Fang--Zhang \cite{ChenFangZhang2017}, and many others.  Seregin proved that axisymmetric energy solutions do not admit Type-I blowup and later obtained slightly supercritical local criteria \cite{Seregin2020,Seregin2022}.  None of these results gives unconditional regularity for arbitrary smooth finite-energy swirl.

The purpose of this paper is narrower.  We place the standard axisymmetric equations into a weighted five-dimensional form and determine exactly what this representation proves without an additional regularity theorem.  Three features are useful.  First, the axis has zero capacity for the natural five-dimensional energy.  Second, the circulation equation has a singular-looking but coercive weighted structure when it is tested with sign-safe truncations.  Third, the nonlinear source in the equation for $G=\omega^\theta/r$ is the derivative of the nonnegative density $F^2$, where $F=u^\theta/r$.

These facts do not by themselves close the arbitrary-swirl problem.  The main result is therefore stated as a conditional reduction: if the weighted De Giorgi step, source-decoupling step, and endpoint compactness step hold uniformly on a singularity-selected sequence, then the limiting profile is no-swirl and the classical no-swirl regularity theory gives a contradiction.  Every hypothesis is displayed explicitly.  We also show why a tempting scale-independent weighted quartic estimate cannot supply the missing step in the parameter range $3/4<\alpha<1$.

\paragraph{Relation to regularity criteria.}
The component criteria in \cite{ChaeLee2002,ChenFangZhang2017} assume spacetime integrability of selected velocity or vorticity components.  The local theory in \cite{Seregin2020,Seregin2022} is organized around scale-invariant or slightly supercritical energy quantities.  The present approach is different: it isolates a weighted source-decoupling mechanism in the five-dimensional lift.  Its value is structural rather than a stronger regularity criterion.  In particular, no estimate below converts finite energy alone into the missing oscillation decay.

\section{Axisymmetric equations and weighted lifts}
Let
\[
u=u^r(r,z,t)e_r+u^\theta(r,z,t)e_\theta+u^z(r,z,t)e_z
\]
be a smooth axisymmetric solution of the three-dimensional incompressible Navier--Stokes equations with viscosity $\nu>0$.  Write
\[
b=(u^r,u^z),\qquad
\Dt=\partial_t+u^r\partial_r+u^z\partial_z,
\]
and use incompressibility in the form
\[
\partial_r u^r+\frac{u^r}{r}+\partial_z u^z=0.
\]
Introduce
\[
\Gamma=ru^\theta,\qquad
F=\frac{u^\theta}{r}=\frac{\Gamma}{r^2},\qquad
G=\frac{\omega^\theta}{r},\qquad
U=\frac{u^r}{r}.
\]
The three natural measures used below are
\[
\dd\muThree=r\,\dd r\dd z,\qquad
\dd\muFive=r^3\,\dd r\dd z,\qquad
\dd\nuGam=\frac{\dd r\dd z}{r}.
\]
They are the meridional forms of the physical three-dimensional measure, the radial five-dimensional measure, and the circulation energy measure, respectively.

The basic system can be derived directly from the cylindrical equations.

\medskip
\noindent\textbf{Exact lifted equations.}
\begin{proposition}\label{prop:lifted-system}
For every smooth axisymmetric solution,
\[
\boxed{
\Dt\Gamma
=\nu\left(\partial_r^2-\frac1r\partial_r+\partial_z^2\right)\Gamma,}
\tag{2.1}
\]
\[
\boxed{
\Dt F+2UF=\nu\Delta_5F,\qquad
\Delta_5=\partial_r^2+\frac3r\partial_r+\partial_z^2,}
\tag{2.2}
\]
and
\[
\boxed{
\Dt G=\nu\Delta_5G+\partial_z(F^2).}
\tag{2.3}
\]
If $\psi$ is the lifted stream function defined by
\[
-\Delta_5\psi=G,
\]
then
\[
 u^r=-r\psi_z,\qquad u^z=2\psi+r\psi_r,
\]
and consequently
\[
\boxed{U=-\partial_z(-\Delta_5)^{-1}G.}
\tag{2.4}
\]
\end{proposition}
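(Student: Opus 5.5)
The plan is to start from the cylindrical Navier--Stokes equations for $(u^r,u^\theta,u^z)$ and the azimuthal vorticity equation, and obtain each boxed identity by substitution plus the commutator identities relating the meridional Laplacians with different radial weights. The swirl equation reads
\[
\Dt u^\theta+\frac{u^r u^\theta}{r}=\nu\Bigl(\partial_r^2+\frac1r\partial_r+\partial_z^2-\frac1{r^2}\Bigr)u^\theta .
\]
Multiplying by $r$ and using $\Dt(ru^\theta)=r\Dt u^\theta+u^ru^\theta$ cancels the stretching term. A direct computation then shows
\[
r\Bigl(\partial_r^2+\tfrac1r\partial_r-\tfrac1{r^2}\Bigr)u^\theta=\Bigl(\partial_r^2-\tfrac1r\partial_r\Bigr)(ru^\theta),
\]
which gives (2.1).

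For (2.2) I will write $\Gamma=r^2F$ and use $\Dt(r^2F)=r^2\Dt F+2ru^rF$. This produces the term $2UF$ after dividing by $r^2$. On the diffusion side I need the conjugation identity
\[
\Bigl(\partial_r^2-\tfrac1r\partial_r\Bigr)(r^2F)=r^2\Bigl(\partial_r^2+\tfrac3r\partial_r\Bigr)F .
\]
This holds because the zeroth-order terms $2F-2F$ cancel and the first-order terms combine as $4r-r=3r$.

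For (2.3) the starting point is the standard $\omega^\theta$ equation:
\[
\Dt\omega^\theta-\frac{u^r}{r}\omega^\theta=\nu\Bigl(\Delta-\frac1{r^2}\Bigr)\omega^\theta+\frac{\partial_z (u^\theta)^2}{r}.
\]
I derive it by taking the curl of the meridional momentum equation, or I quote it as classical. Setting $\omega^\theta=rG$, the transport gives $r\Dt G+u^rG$, so the term $u^rG$ cancels against $-\frac{u^r}{r}\omega^\theta$. The analogous conjugation $(\partial_r^2+\frac1r\partial_r-\frac1{r^2})(rG)=r(\partial_r^2+\frac3r\partial_r)G$ handles the viscous term. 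Finally $\partial_z(u^\theta)^2/r^2=\partial_z(F^2)$ gives the source.

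For the stream function, the step I expect to need the most care is the set of conventions and the well-posedness of $(-\Delta_5)^{-1}$; the algebra itself is routine. I will start from the classical axisymmetric stream function $\Psi$ with $u^r=-\partial_z\Psi/r$, $u^z=\partial_r\Psi/r$. These satisfy incompressibility automatically. The defining relation is $\omega^\theta=\partial_zu^r-\partial_ru^z=-\frac1r\bigl(\partial_r^2-\frac1r\partial_r+\partial_z^2\bigr)\Psi$. I then set $\Psi=r^2\psi$. The same conjugation as for $\Gamma$ gives $\omega^\theta=-r\Delta_5\psi$, that is, $-\Delta_5\psi=G$. Substituting, $u^r=-r\psi_z$ and $u^z=\frac1r\partial_r(r^2\psi)=2\psi+r\psi_r$, so $U=-\psi_z=-\partial_z(-\Delta_5)^{-1}G$, which is (2.4). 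To justify identifying this $\psi$ with $(-\Delta_5)^{-1}G$, I view $G$ and $\psi$ as radial functions on $\R^5$ (in the variables $(y,z)\in\R^4\times\R$ with $|y|=r$), where $\Delta_5$ is the genuine Laplacian. Uniqueness of decaying solutions of Poisson's equation on $\R^5$, together with smoothness and decay of $\psi=\Psi/r^2$ for smooth finite-energy flows, then fixes the inverse.
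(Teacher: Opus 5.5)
Your proposal is correct and follows the paper's route. You substitute $\Gamma=ru^\theta$, $F=u^\theta/r$ (you pass through $\Gamma=r^2F$ in (2.1) rather than the swirl equation, which is immaterial), $G=\omega^\theta/r$ and $\Psi=r^2\psi$ into the cylindrical equations, using conjugation identities for the viscous operators that the paper leaves implicit; your Liouville-type identification of $\psi=\Psi/r^2$ with $(-\Delta_5)^{-1}G$ on $\R^5$ adds a justification the paper only cites as standard.
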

\begin{proof}
The azimuthal velocity equation is
\[
\Dt u^\theta+\frac{u^r}{r}u^\theta
=\nu\left(\partial_r^2+\frac1r\partial_r+\partial_z^2-\frac1{r^2}\right)u^\theta.
\]
Multiplying by $r$ and using $\Dt r=u^r$ gives (2.1).  Substituting $u^\theta=rF$ into the same equation gives
\[
\Dt(rF)+U(rF)=r\Dt F+2u^rF
=\nu r\left(F_{rr}+\frac3rF_r+F_{zz}\right),
\]
which is (2.2).

For the azimuthal vorticity one has
\[
\Dt\omega^\theta-U\omega^\theta
=\nu\left(\partial_r^2+\frac1r\partial_r+\partial_z^2-\frac1{r^2}\right)\omega^\theta
+\frac1r\partial_z[(u^\theta)^2].
\]
Writing $\omega^\theta=rG$ and $(u^\theta)^2=r^2F^2$ cancels the stretching contribution exactly and yields (2.3).  Finally the standard axisymmetric stream-function representation gives
\[
-\left(\partial_r^2+\frac3r\partial_r+\partial_z^2\right)\psi=G,
\quad
u^r=-r\psi_z,
\quad
u^z=2\psi+r\psi_r.
\]
Dividing the first velocity identity by $r$ gives (2.4).
\end{proof}

The five-dimensional interpretation is literal: if $r=|y|$ with $y\in\R^4$, then $\Delta_5$ is the ordinary Laplacian in $(y,z)\in\R^5$ acting on functions radial in $y$.  This permits standard Sobolev and potential estimates without introducing a fictitious singular boundary at $r=0$.

\subsection{Parabolic scaling}
The Navier--Stokes scaling is
\[
u_\lambda(x,t)=\lambda u(\lambda x,\lambda^2t),
\qquad
p_\lambda(x,t)=\lambda^2p(\lambda x,\lambda^2t).
\]
The lifted variables transform by
\[
\boxed{
\Gamma_\lambda(r,z,t)=\Gamma(\lambda r,\lambda z,\lambda^2t),}
\tag{2.5}
\]
\[
\boxed{
F_\lambda=\lambda^2F(\lambda r,\lambda z,\lambda^2t),
\quad
G_\lambda=\lambda^3G(\lambda r,\lambda z,\lambda^2t),
\quad
U_\lambda=\lambda^2U(\lambda r,\lambda z,\lambda^2t).}
\tag{2.6}
\]
In particular the circulation amplitude is scale invariant.  The source in the $G$ equation has the same homogeneity as the other terms:
\[
\partial_z(F_\lambda^2)
=\lambda^5[\partial_z(F^2)](\lambda r,\lambda z,\lambda^2t),
\]
while $\partial_tG_\lambda$, $b_\lambda\cdot\nabla G_\lambda$, and $\Delta_5G_\lambda$ all scale like $\lambda^5$.

\begin{proof}[Derivation of the scaling formulas]
The first identity follows directly from
\[
r u_\lambda^\theta(r,z,t)
=r\lambda u^\theta(\lambda r,\lambda z,\lambda^2t)
=(\lambda r)u^\theta(\lambda r,\lambda z,\lambda^2t).
\]
For $F$, divide $u_\lambda^\theta$ by $r$ and insert
$F(\lambda r,\lambda z)=u^\theta(\lambda r,\lambda z)/(\lambda r)$.
For $G$, use $\omega_\lambda=\lambda^2\omega(\lambda x,\lambda^2t)$ and again divide by $r$.  The formula for $U$ is identical to that for $F$.  Differentiating $F_\lambda^2=\lambda^4F^2(\lambda r,\lambda z,\lambda^2t)$ gives the source scaling.
\end{proof}

The name ``corridor'' refers to the weighted interpolation range $3/4<\alpha<1$ that motivated the original reduction.  The exact identities proved in Sections 2--9 do not require a choice of $\alpha$.  Section 10 shows that the most direct scale-independent weighted quartic estimate fails throughout this range, so the parameter interval should not be confused with a completed regularity theorem.

\section{The axis has zero five-dimensional capacity}
The first elementary fact is that the symmetry axis is negligible for the $H^1(\dd\muFive)$ energy.  This justifies approximation across the axis in the lifted variables.

\medskip
\noindent\textbf{Zero capacity of the axis.}
\begin{lemma}\label{lem:axis-capacity}
Let $K\Subset\R_z$ be compact.  There exist smooth cutoffs $\chi_\varepsilon(r)$ such that $\chi_\varepsilon=0$ for $r\le\varepsilon$, $\chi_\varepsilon=1$ for $r\ge2\varepsilon$, and
\[
\int_K\int_0^\infty |\partial_r\chi_\varepsilon|^2r^3\,\dd r\dd z
\longrightarrow0,
\]
\[
\int_K\int_0^\infty |1-\chi_\varepsilon|^2r^3\,\dd r\dd z
\longrightarrow0.
\]
In particular the set $\{r=0\}\times K$ has zero $H^1(\dd\muFive)$ capacity.
\end{lemma}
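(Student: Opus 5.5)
We construct the cutoffs by rescaling a single fixed profile. Fix $\eta\in C^\infty(\R)$ with $0\le\eta\le1$, $\eta(s)=0$ for $s\le1$ and $\eta(s)=1$ for $s\ge2$. Set $\chi_\varepsilon(r)=\eta(r/\varepsilon)$. The support conditions $\chi_\varepsilon=0$ for $r\le\varepsilon$ and $\chi_\varepsilon=1$ for $r\ge2\varepsilon$ then hold by construction. Since $\chi_\varepsilon$ does not depend on $z$, each double integral factors as $|K|$ (the one-dimensional Lebesgue measure of $K$, finite because $K$ is compact) times a radial integral. It therefore suffices to estimate the two radial integrals.

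For the gradient term, $\partial_r\chi_\varepsilon=\varepsilon^{-1}\eta'(r/\varepsilon)$, and this is supported in $[\varepsilon,2\varepsilon]$. Substituting $r=\varepsilon s$ gives
\[
\int_0^\infty|\partial_r\chi_\varepsilon|^2r^3\,\dd r=\varepsilon^{-2}\cdot\varepsilon^{4}\int_1^2|\eta'(s)|^2s^3\,\dd s=C_\eta\,\varepsilon^{2}\to0.
\]
For the mass term, $1-\chi_\varepsilon$ is bounded by $1$ and supported in $[0,2\varepsilon]$. Hence
\[
\int_0^\infty|1-\chi_\varepsilon|^2r^3\,\dd r\le\int_0^{2\varepsilon}r^3\,\dd r=4\varepsilon^4\to0.
\]
Multiplying both bounds by $|K|$ proves the two displayed limits.

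For the capacity statement, take any compactly supported smooth test function $\phi(z)$ with $\phi=1$ on $K$, and fix $R>0$. Define
\[
w_\varepsilon(r,z)=(1-\chi_\varepsilon(r))\,\phi(z).
\]
Then $w_\varepsilon=1$ on a neighbourhood of $\{r=0\}\times K$. The same scaling bounds, now applied with $\operatorname{supp}\phi$ in place of $K$, give
\[
\|w_\varepsilon\|_{L^2(\dd\muFive)}^2=O(\varepsilon^4).
\]
For the gradient, the $\partial_r$ part is $O(\varepsilon^2)$. The $\partial_z$ part is bounded by $\|\phi'\|_\infty^2\,|\operatorname{supp}\phi|\cdot4\varepsilon^4$. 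Hence $\|w_\varepsilon\|_{H^1(\dd\muFive)}\to0$, which is exactly the statement that the set has zero $H^1(\dd\muFive)$ capacity.

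This is also the step where one has to be careful: the capacity must be tested with functions equal to $1$ near the set, not just the cutoffs themselves. That is why the tensor product with $\phi$ is needed.

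The main point worth flagging is conceptual rather than computational. The argument works because the weight $r^3$ is the radial part of Lebesgue measure on $\R^4$. A codimension-four set has zero $H^1$ capacity, and the scaling exponent $\varepsilon^{4-2}=\varepsilon^2$ is positive. Equivalently, the line $\{0\}\times\R$ in $\R^5$ has codimension $4>2$. No logarithmic cutoff is needed here, unlike the borderline three-dimensional measure $\dd\muThree$.
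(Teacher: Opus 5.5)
Your proof is correct and is essentially the paper's argument: the same rescaled profile $\chi_\varepsilon(r)=\chi(r/\varepsilon)$, with the gradient term of order $\varepsilon^2$ from the support in $[\varepsilon,2\varepsilon]$, the mass term bounded by $\int_0^{2\varepsilon}r^3\,dr=4\varepsilon^4$, and both multiplied by $|K|$. Your tensor-product function $(1-\chi_\varepsilon(r))\phi(z)$ additionally proves the ``in particular'' capacity claim, which the paper only asserts. Two small fixes: take $\phi=1$ on a neighbourhood of $K$ (not just on $K$) if you want $w_\varepsilon=1$ on a genuine neighbourhood, and drop the unused $R$.
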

\begin{proof}
Choose a fixed $\chi\in C^\infty([0,\infty))$ with $0\le\chi\le1$, $\chi=0$ on $[0,1]$, $\chi=1$ on $[2,\infty)$, and set $\chi_\varepsilon(r)=\chi(r/\varepsilon)$.  Then $|\chi_\varepsilon'|\le C/\varepsilon$ and the derivative is supported in $[\varepsilon,2\varepsilon]$.  Hence
\[
\int_0^\infty|\chi_\varepsilon'|^2r^3\,\dd r
\le\frac{C^2}{\varepsilon^2}\int_\varepsilon^{2\varepsilon}r^3\,\dd r
\le C_1\varepsilon^2.
\]
Similarly $1-\chi_\varepsilon$ is supported in $[0,2\varepsilon]$, so
\[
\int_0^\infty|1-\chi_\varepsilon|^2r^3\,\dd r
\le\int_0^{2\varepsilon}r^3\,\dd r=4\varepsilon^4.
\]
Multiplication by $|K|$ proves the two limits.
\end{proof}

\medskip
\noindent\textbf{Density across the axis.}
\begin{corollary}\label{cor:density-axis}
On a bounded lifted cylinder, smooth functions supported away from $r=0$ are dense in the radial $H^1(\dd\muFive)$ class, after the usual truncation at the remaining boundary.
\end{corollary}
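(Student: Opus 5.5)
The plan is to approximate a given $f$ in two stages: first make it bounded and smooth up to the axis by standard means, then multiply by the cutoffs $\chi_\varepsilon$ of Lemma~\ref{lem:axis-capacity} and show that $\chi_\varepsilon f\to f$ in $H^1(\dd\muFive)$.

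Fix a bounded lifted cylinder $Q=\{0\le r<R,\ z\in K\}$ with $K$ a compact interval, and let $f$ be in the radial $H^1(\dd\muFive)$ class on $Q$. By the five-dimensional interpretation after Proposition~\ref{prop:lifted-system}, $f$ is an ordinary $H^1$ function on the cylinder $\{|y|<R\}\times K\subset\R^5$ that is radial in $y$. After the usual truncation at the outer boundary $r=R$ and $z\in\partial K$, standard mollification in $\R^5$ by a kernel radial in $y$ preserves radiality. Hence functions $f$ that are smooth up to $r=0$, bounded, and have bounded gradient are dense.

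It therefore suffices to approximate such an $f$ by $\chi_\varepsilon f$, which vanishes for $r\le\varepsilon$. We have
\[
\|f-\chi_\varepsilon f\|_{L^2(\dd\muFive)}^2\le\|f\|_\infty^2\int_K\int_0^\infty|1-\chi_\varepsilon|^2r^3\,\dd r\dd z\to0
\]
by the second limit in Lemma~\ref{lem:axis-capacity}. For the gradient,
\[
\nabla(f-\chi_\varepsilon f)=(1-\chi_\varepsilon)\nabla f-f\,\partial_r\chi_\varepsilon\,e_r .
\]
The first term tends to zero in $L^2(\dd\muFive)$ by the same bound with $\|\nabla f\|_\infty$ in place of $\|f\|_\infty$, or by dominated convergence. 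The second term is bounded by $\|f\|_\infty^2$ times the first integral in the lemma, which tends to zero. A diagonal argument then combines the two approximation steps.

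The only real point is boundedness of the intermediate approximant, since the capacity estimate controls $\partial_r\chi_\varepsilon$ only against bounded multipliers. If one prefers to avoid mollification, one can instead first truncate at levels $\pm M$; such truncation is continuous in $H^1$ and preserves radiality. Either route works, and I expect no genuine obstacle, because the dimension count $r^3$, giving $\varepsilon^2\to0$, is exactly what the lemma already supplies.
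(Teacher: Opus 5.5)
Your proposal is correct and follows the paper's proof essentially verbatim. You multiply a bounded smooth approximant by the cutoffs $\chi_\varepsilon$ of Lemma~\ref{lem:axis-capacity}, control $(1-\chi_\varepsilon)\nabla f$ by dominated convergence and $f\,\partial_r\chi_\varepsilon$ by $\|f\|_\infty^2$ times the capacity energy, and reduce general $H^1$ functions to this case by truncation and smoothing in the five-dimensional lift. Your remark that a mollifier radial in $y$ preserves radiality, and your diagonal argument, simply fill in details the paper leaves as ``standard.''
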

\begin{proof}
For a bounded smooth $v$, use $v\chi_\varepsilon$.  The $L^2$ difference tends to zero by Lemma~\ref{lem:axis-capacity}.  For the derivative,
\[
\nabla(v-v\chi_\varepsilon)
=(1-\chi_\varepsilon)\nabla v-v\nabla\chi_\varepsilon.
\]
The first term converges to zero by dominated convergence, while the second is bounded by $\|v\|_\infty$ times the capacity energy in Lemma~\ref{lem:axis-capacity}.  General $H^1$ functions follow by truncation and standard smooth approximation in the five-dimensional lift.
\end{proof}

\section{Weighted Friedrichs and Hardy inequalities}
The next two inequalities have different roles.  The five-dimensional Friedrichs inequality controls ordinary lifted $H^1$ functions.  The circulation Hardy inequality controls a function that vanishes on the axis in the singular measure $r^{-1}\dd r\dd z$.

\medskip
\noindent\textbf{Five-dimensional Friedrichs inequality.}
\begin{lemma}\label{lem:friedrichs}
Let
\[
Q=\{0<r<1,\ |z|<1\}
\]
and let $v$ have zero trace on the outer boundary $\{r=1\}\cup\{|z|=1\}$.  Then
\[
\int_Q|v|^2\,\dd\muFive
\le C_F\int_Q|\nabla v|^2\,\dd\muFive.
\]
\end{lemma}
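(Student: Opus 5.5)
The simplest route is a one-dimensional Poincaré inequality in the $z$ variable, applied on each vertical line $r=\mathrm{const}$. The weight $r^3$ depends only on $r$, so it passes through this argument untouched. By Corollary~\ref{cor:density-axis} and standard approximation, it suffices to prove the inequality for smooth $v$ on $\overline Q$ that vanish near $\{|z|=1\}\cup\{r=1\}$ and are supported away from the axis $r=0$. Both sides are continuous in the radial $H^1(\dd\muFive)$ norm, so the inequality then extends to the whole class.

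For fixed $r\in(0,1)$ we have $v(r,\pm1)=0$, so
\[
v(r,z)=\int_{-1}^z\partial_zv(r,s)\,\dd s .
\]
Cauchy--Schwarz gives $|v(r,z)|^2\le 2\int_{-1}^1|\partial_zv(r,s)|^2\,\dd s$. Integrating in $z\in(-1,1)$ then yields
\[
\int_{-1}^1|v(r,z)|^2\,\dd z\le 4\int_{-1}^1|\partial_zv(r,z)|^2\,\dd z .
\]
Multiplying by $r^3$, integrating over $r\in(0,1)$, and using $|\partial_zv|\le|\nabla v|$ gives the lemma with $C_F=4$. Only the zero trace on $\{|z|=1\}$ is used; the condition at $r=1$ is not needed.

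Alternatively, the statement is the ordinary Friedrichs inequality for the five-dimensional bounded domain $\{|y|<1,\ |z|<1\}\subset\R^5$, applied to functions that are radial in $y$ and vanish on its boundary. Here the axis $r=0$ is an interior set, not a boundary, which is exactly what Lemma~\ref{lem:axis-capacity} guarantees.

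There is no real obstacle. The only point needing care is the density step: smooth functions supported away from the axis must approximate the $H^1(\dd\muFive)$ class while preserving zero trace on the outer boundary. Corollary~\ref{cor:density-axis} supplies this, since the cutoff $\chi_\varepsilon$ acts only in $r$ near $0$ and does not disturb the trace at $|z|=1$ or $r=1$.
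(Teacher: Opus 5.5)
Your argument is correct, but your main route differs from the paper's.

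The paper lifts $v$ to $V(y,z)=v(|y|,z)$ on $D=B_1^4\times(-1,1)\subset\R^5$. It then uses the zero-capacity Lemma~\ref{lem:axis-capacity} (through Corollary~\ref{cor:density-axis}) to conclude that $V\in H_0^1(D)$, i.e.\ that the axis is an interior set of $D$ and carries no boundary condition. Finally it quotes the ordinary Poincar\'e--Friedrichs inequality on $D$. This is exactly your ``alternative'' paragraph.

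Your primary proof instead applies the one-dimensional Poincar\'e inequality on each vertical fiber and lets the $r$-only weight $r^3$ pass through. This gives several things:
\begin{enumerate}[label=(\roman*)]
\item an explicit constant, $C_F=4$;
\item it uses only vanishing on one of the faces $\{z=\pm1\}$;
\item it works verbatim for any weight depending only on $r$, including the singular circulation measure $\dd\nuGam$, where no Euclidean lift is available.
\end{enumerate}

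It also makes the capacity argument unnecessary. A function in the weighted class is an ordinary $H^1$ function on each $\{\delta<r<1,\ |z|<1\}$. So for a.e.\ $r$ the fiber $z\mapsto v(r,z)$ is absolutely continuous with the trace values at $z=\pm1$, and your fiberwise estimate applies directly. Your appeal to Corollary~\ref{cor:density-axis} is therefore harmless but superfluous.

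What the paper's route buys is coherence with the five-dimensional viewpoint and independence from the product structure of $Q$. The lift argument works on any bounded lifted domain with Dirichlet data on its outer boundary. Yours needs the domain to be bounded in the $z$-direction with data on a $z$-face.
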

\begin{proof}
Lift $v(r,z)$ to $V(y,z)=v(|y|,z)$ on $D=B_1^4\times(-1,1)\subset\R^5$.  Up to the area $|S^3|$, the two integrals are exactly $\|V\|_{L^2(D)}^2$ and $\|\nabla V\|_{L^2(D)}^2$.  The assumed trace condition places $V$ in $H_0^1(D)$ after the zero-capacity axis approximation of Corollary~\ref{cor:density-axis}.  The ordinary Poincar\'e--Friedrichs inequality on $D$ gives the result.
\end{proof}

\medskip
\noindent\textbf{Sign-safe Hardy inequality at the axis.}
\begin{lemma}\label{lem:hardy}
Let $w$ be absolutely continuous on $(0,1)$, $w(0)=0$, and
\[
\int_0^1\frac{|w_r|^2}{r}\,\dd r<\infty.
\]
Then
\[
\boxed{
\int_0^1\frac{|w|^2}{r}\,\dd r
\le\frac14\int_0^1\frac{|w_r|^2}{r}\,\dd r.}
\tag{4.1}
\]
The same estimate holds after integration in $z$.
\end{lemma}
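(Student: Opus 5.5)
The inequality follows from one pointwise Cauchy--Schwarz bound, with no variational argument and no use of the lifted five-dimensional structure. Set
\[
E(r)=\int_0^r\frac{|w_r(\rho)|^2}{\rho}\,\dd\rho ,
\]
which is finite and nondecreasing on $(0,1]$ by hypothesis.

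\emph{Pointwise bound.} Since $w$ is absolutely continuous with $w(0)=0$, we have $w(r)=\int_0^r w_r(\rho)\,\dd\rho$. Write the integrand as $\bigl(w_r\rho^{-1/2}\bigr)\bigl(\rho^{1/2}\bigr)$ and apply Cauchy--Schwarz:
\[
|w(r)|^2\le\int_0^r\frac{|w_r|^2}{\rho}\,\dd\rho\cdot\int_0^r\rho\,\dd\rho=\frac{r^2}{2}\,E(r).
\]

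\emph{Integration.} Divide by $r$ and integrate over $(0,1)$. Using $E(r)\le E(1)$ gives
\[
\int_0^1\frac{|w|^2}{r}\,\dd r\le\int_0^1\frac r2\,E(r)\,\dd r\le\frac{E(1)}{2}\int_0^1 r\,\dd r=\frac14\int_0^1\frac{|w_r|^2}{r}\,\dd r .
\]
This is exactly (4.1). The left-hand integrand is nonnegative and measurable, so no sign or cancellation issue arises. The constant is consistent with the test function $w=r^2$: there the left side is $1/4$ and the right side is $1/2$. The weight $r^{-1}$ on both sides is not scale homogeneous, so the estimate is really of Poincar\'e--Hardy type on the unit interval. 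The boundary value at $r=1$ plays no role, because the argument only integrates outward from the axis.

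\emph{Integration in $z$.} For the statement after integration in $z$, apply the one-dimensional bound for almost every fixed $z$. By Fubini, $r\mapsto w(r,z)$ has $\int_0^1|w_r|^2r^{-1}\,\dd r<\infty$ for almost every $z$. One also needs absolute continuity with $w(0,z)=0$ for almost every $z$, which holds for the (ACL) representative in the relevant class. Integrating the resulting inequality in $z$ by Tonelli then gives the two-variable version with respect to $\nuGam$.

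The only point needing care is this justification that the slices satisfy the hypotheses for almost every $z$. If $w$ is only known in a weighted Sobolev class, I would first prove the inequality for smooth $w$ vanishing near $r=0$, and then pass to the limit. The right-hand side passes to the limit by construction of the approximation, and the left-hand side by Fatou's lemma.
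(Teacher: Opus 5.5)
Your proof is correct and is essentially the paper's argument: the same weighted Cauchy--Schwarz bound $|w(r)|^2\le\frac{r^2}{2}\int_0^r|w_s|^2s^{-1}\,ds$, followed by integration against $dr/r$. The differences are cosmetic. You bound $E(r)\le E(1)$ directly, whereas the paper uses Fubini to get the slightly sharper weight $\frac14(1-s^2)$ and then drops it. Your justification of the almost-every-$z$ slices is also more careful than the paper's one-line remark.
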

\begin{proof}
Since $w(0)=0$,
\[
w(r)=\int_0^r w_s(s)\,\dd s
=\int_0^r\frac{w_s(s)}{\sqrt s}\sqrt s\,\dd s.
\]
Cauchy--Schwarz gives
\[
|w(r)|^2\le\frac{r^2}{2}\int_0^r\frac{|w_s(s)|^2}{s}\,\dd s.
\]
Divide by $r$ and integrate:
\[
\begin{aligned}
\int_0^1\frac{|w(r)|^2}{r}\,\dd r
&\le\frac12\int_0^1r\int_0^r\frac{|w_s(s)|^2}{s}\,\dd s\dd r\\
&=\frac14\int_0^1(1-s^2)\frac{|w_s(s)|^2}{s}\,\dd s\\
&\le\frac14\int_0^1\frac{|w_s(s)|^2}{s}\,\dd s.
\end{aligned}
\]
Fubini's theorem proves the one-dimensional estimate.  Integrating the same inequality for almost every $z$ gives the two-dimensional version.
\end{proof}

The boundary value $w(0)=0$ is essential.  It is automatically available for sign-safe truncations of the circulation once the truncation level contains the axis value $\Gamma(0,z,t)=0$.

\section{The exact circulation Caccioppoli identity}
The circulation equation is naturally symmetric in $\dd\nuGam=r^{-1}\dd r\dd z$.  The drift is not divergence free in this measure, and the resulting divergence term must be kept.

\medskip
\noindent\textbf{Weighted divergence of the meridional drift.}
\begin{lemma}\label{lem:weighted-div}
For $b=(u^r,u^z)$,
\[
\boxed{\operatorname{div}_{\nuGam}b=-2U.}
\tag{5.1}
\]
\end{lemma}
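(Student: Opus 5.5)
The weighted divergence with respect to a measure $\rho\,\dd r\dd z$ is $\operatorname{div}_\rho b=\rho^{-1}\bigl(\partial_r(\rho u^r)+\partial_z(\rho u^z)\bigr)$. It is characterized by $\int (b\cdot\nabla\phi)\,\rho\,\dd r\dd z=-\int \phi\,\operatorname{div}_\rho b\,\rho\,\dd r\dd z$ for test functions $\phi$ compactly supported in $\{r>0\}$. I will take this as the definition and apply it with $\rho=r^{-1}$, the density of $\nuGam$. Support away from $r=0$ avoids any boundary term at the axis. If one wants test functions that reach the axis, Lemma~\ref{lem:axis-capacity} and Corollary~\ref{cor:density-axis} are available, but the identity itself is pointwise for $r>0$ and needs no approximation.

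The computation is a single product rule. We have
\[
r\,\partial_r\!\left(\frac{u^r}{r}\right)=\partial_r u^r-\frac{u^r}{r},
\qquad
r\,\partial_z\!\left(\frac{u^z}{r}\right)=\partial_z u^z .
\]
Adding these gives
\[
\operatorname{div}_{\nuGam}b=\partial_r u^r+\partial_z u^z-\frac{u^r}{r}.
\]
Incompressibility in the form $\partial_r u^r+\partial_z u^z=-u^r/r$ then gives $\operatorname{div}_{\nuGam}b=-2u^r/r=-2U$, which is (5.1).

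As a consistency check, the same computation with $\dd\muThree=r\,\dd r\dd z$ gives zero, which is the physical incompressibility. With $\dd\muFive$ it gives $+2U$. So the $\nuGam$ measure differs from the physical one by the factor $r^{-2}$, and this factor generates the $-2U$.

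There is no real obstacle. The only care needed is the sign convention for $\operatorname{div}_{\nuGam}$, which must agree with its later use in the Caccioppoli identity. Smoothness of $u$ near the axis ensures $U=u^r/r$ is bounded there, since $u^r$ is odd in $r$. This is relevant only when (5.1) is integrated against test functions near $r=0$.
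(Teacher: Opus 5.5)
Your proposal is correct and follows the paper's proof exactly: you compute $r\bigl[\partial_r(u^r/r)+\partial_z(u^z/r)\bigr]=\partial_r u^r+\partial_z u^z-u^r/r$ and then use incompressibility to obtain $-2u^r/r=-2U$. Your integration-by-parts sign convention also agrees with how (5.1) is used in the transport step of the Caccioppoli identity.
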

\begin{proof}
For the density $r^{-1}$,
\[
\operatorname{div}_{\nuGam}b
=r\left[\partial_r\left(\frac{u^r}{r}\right)+\partial_z\left(\frac{u^z}{r}\right)\right]
=\partial_ru^r+\partial_zu^z-\frac{u^r}{r}.
\]
Incompressibility gives $\partial_ru^r+\partial_zu^z=-u^r/r$, hence the result.
\end{proof}

Let $w$ be a convex Lipschitz truncation of $\Gamma$, for example $(\Gamma-k)_+$ or $(-\Gamma-k)_+$.  On the set where $w$ is active, the chain rule gives the same diffusion equation in the standard weak-subsolution sense.

\medskip
\noindent\textbf{Weighted Caccioppoli formula.}
\begin{proposition}\label{prop:caccioppoli}
Let $\eta$ be a smooth compactly supported spacetime cutoff and let $w$ be a sign-safe truncation of $\Gamma$.  Then
\[
\begin{aligned}
\frac12\frac{\dd}{\dd t}\int\eta^2w^2\,\dd\nuGam
+\nu\int\eta^2|\nabla w|^2\,\dd\nuGam
={}&\int\eta\eta_t w^2\,\dd\nuGam
-2\nu\int\eta w\nabla\eta\cdot\nabla w\,\dd\nuGam\\
&+\int\eta w^2 b\cdot\nabla\eta\,\dd\nuGam
-\int U\eta^2w^2\,\dd\nuGam.
\end{aligned}
\tag{5.2}
\]
For nonsmooth convex truncations, equality is replaced by the corresponding favorable inequality.
\end{proposition}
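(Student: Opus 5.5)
The plan is to write the circulation equation (2.1) in divergence form with respect to $\dd\nuGam$, test it against $\eta^2 w$, and integrate by parts. Three points need care: the drift term, which produces the weighted divergence of Lemma~\ref{lem:weighted-div}; the boundary contribution at the axis; and the passage from smooth $\Gamma$ to convex truncations.

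\textbf{Step 1 (divergence form).} The key observation is
\[
\partial_r^2-\frac1r\partial_r+\partial_z^2 \;=\; r\,\nabla\cdot\Bigl(\frac1r\nabla\,\cdot\Bigr),
\]
which is checked by expanding $r\,\partial_r(r^{-1}\partial_r\Gamma)=\Gamma_{rr}-r^{-1}\Gamma_r$. Hence (2.1) reads
\[
\partial_t\Gamma+b\cdot\nabla\Gamma=\nu\, r\,\nabla\cdot(r^{-1}\nabla\Gamma).
\]
The operator on the right is exactly the Dirichlet-form generator for $\dd\nuGam$. On the active set of $w=(\pm\Gamma-k)_+$, the truncation satisfies the same equation, or the subsolution inequality after multiplying by the nonnegative factor $\eta^2 w$.

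\textbf{Step 2 (testing, smooth case).} Multiply the equation by $\eta^2 w$ and integrate against $\dd\nuGam$. The three terms are handled as follows.

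The time term gives $\frac12\frac{\dd}{\dd t}\int\eta^2w^2\,\dd\nuGam-\int\eta\eta_tw^2\,\dd\nuGam$.

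The diffusion term equals $\nu\int\eta^2 w\,\nabla\cdot(r^{-1}\nabla w)\,\dd r\dd z$. Integrating by parts gives
\[
-\nu\int\eta^2|\nabla w|^2\,\dd\nuGam-2\nu\int\eta w\nabla\eta\cdot\nabla w\,\dd\nuGam,
\]
plus the axis boundary term $\lim_{r\to0}\int\eta^2 w\,w_r r^{-1}\,\dd z$.

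For the transport term, write $\int\eta^2 w\,b\cdot\nabla w\,\dd\nuGam=\frac12\int\eta^2 b\cdot\nabla(w^2)\,\dd\nuGam$. Integrating by parts in $\dd\nuGam$ and using Lemma~\ref{lem:weighted-div},
\[
\operatorname{div}_{\nuGam}(\eta^2b)=2\eta\,b\cdot\nabla\eta-2U\eta^2,
\]
this equals $-\int\eta w^2 b\cdot\nabla\eta\,\dd\nuGam+\int U\eta^2w^2\,\dd\nuGam$. The $\nuGam$-boundary flux $r^{-1}u^r w^2$ at the axis has the same type as in the axis step below. Collecting all terms yields (5.2) with exactly the stated signs.

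\textbf{Step 3 (axis boundary terms; main obstacle).} The delicate point is that $\dd\nuGam$ is singular at $r=0$, so the boundary terms there must be shown to vanish.

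For smooth axisymmetric flow, $u^\theta$ is odd in $r$ with $u^\theta=O(r)$, so $\Gamma=O(r^2)$ and $\Gamma_r=O(r)$. Also $u^r=O(r)$. Consequently $w\,w_r/r=O(r^2)\to0$ and $u^r w^2/r=O(r^4)\to0$. For sign-safe truncations whose level set contains the axis value $\Gamma(0,z,t)=0$, i.e.\ $k\ge0$, $w$ vanishes identically in a neighbourhood of the axis, so these terms are zero outright.

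To make the argument independent of pointwise expansions, I would instead insert the cutoff $\chi_\varepsilon$ of Lemma~\ref{lem:axis-capacity} into the test function. The commutator terms then involve $\nabla\chi_\varepsilon$ supported in $\varepsilon\le r\le2\varepsilon$. Bounding these with $w=O(r^2)$ shows they are $O(\varepsilon^2)$ in $\nuGam$, and Lemma~\ref{lem:hardy} controls the $\int w^2\,\dd\nuGam$ terms uniformly, so the limit $\varepsilon\to0$ is legitimate.

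\textbf{Step 4 (nonsmooth truncations).} For Lipschitz convex $\phi$ with $w=\phi(\Gamma)$, use $\phi'(\Gamma)\ge0$ and $\phi''\ge0$. This gives
\[
\partial_tw+b\cdot\nabla w\le\nu r\nabla\cdot(r^{-1}\nabla w)
\]
in the weak sense, by a Kato-type inequality: regularize $\phi$, apply Step 2, and discard the term $-\nu\int\eta^2 w\,\phi''|\nabla\Gamma|^2\,\dd\nuGam\le0$. Pass to the limit in the regularization, with Steklov averaging in time if only weak time derivatives are available. This produces (5.2) with ``$\le$'' in the favourable direction.
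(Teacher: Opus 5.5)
Your proposal is correct and is essentially the paper's proof. Like the paper, you test (2.1) against $\eta^2w$ in $\dd\nuGam$, use that $\partial_r^2-r^{-1}\partial_r+\partial_z^2=r\nabla\cdot(r^{-1}\nabla\,\cdot)$ is symmetric in this measure, and handle transport through $\operatorname{div}_{\nuGam}b=-2U$. Your axis-boundary analysis and Kato-type regularization make explicit what the paper compresses into ``symmetric in this measure'' and ``convex approximation.''

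There are two harmless slips. For $k=0$ the truncation need not vanish in a neighbourhood of the axis, but your $O(r^2)$ bound covers that case anyway. Also, $\phi'\ge0$ fails for $(-\Gamma-k)_+$, but it is never used, since the Kato step needs only $\phi''\ge0$ and $w\ge0$.
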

\begin{proof}
Multiply (2.1) by $\eta^2w$ and integrate in $\dd\nuGam$.  The operator
\[
L_0=\partial_r^2-r^{-1}\partial_r+\partial_z^2
\]
is symmetric in this measure, so
\[
\nu\int\eta^2wL_0w\,\dd\nuGam
=-\nu\int\eta^2|\nabla w|^2\,\dd\nuGam
-2\nu\int\eta w\nabla\eta\cdot\nabla w\,\dd\nuGam.
\]
For the time derivative,
\[
\int\eta^2w\partial_tw\,\dd\nuGam
=\frac12\frac{\dd}{\dd t}\int\eta^2w^2\,\dd\nuGam
-\int\eta\eta_t w^2\,\dd\nuGam.
\]
For transport, Lemma~\ref{lem:weighted-div} gives
\[
\begin{aligned}
\int\eta^2w b\cdot\nabla w\,\dd\nuGam
&=\frac12\int\eta^2b\cdot\nabla(w^2)\,\dd\nuGam\\
&=-\int\eta w^2b\cdot\nabla\eta\,\dd\nuGam
-\frac12\int\eta^2w^2\operatorname{div}_{\nuGam}b\,\dd\nuGam\\
&=-\int\eta w^2b\cdot\nabla\eta\,\dd\nuGam
+\int U\eta^2w^2\,\dd\nuGam.
\end{aligned}
\]
Substituting these three identities into the tested equation and moving terms to the displayed sides gives (5.2).  Convex approximation yields the truncation version.
\end{proof}

The last term in (5.2) is the genuine stretching obstruction.  Treating $b$ as divergence free in $\dd\nuGam$ would miss exactly this term.

\section{Five-dimensional recovery of the meridional velocity}
The lift also gives a clean scale-critical estimate for $U$.

\medskip
\noindent\textbf{Five-dimensional HLS estimate.}
\begin{proposition}\label{prop:HLS}
Let $G$ be smooth and compactly supported in the lifted variables, and let
\[
U=-\partial_z(-\Delta_5)^{-1}G.
\]
Then
\[
\boxed{
\|U\|_{L^{10/3}(\dd\muFive)}\le C\|G\|_{L^2(\dd\muFive)}.}
\tag{6.1}
\]
The same estimate holds locally after the usual decomposition into a localized source and a smooth harmonic remainder.
\end{proposition}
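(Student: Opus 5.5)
We lift to $\R^5$ exactly as in the proof of Lemma~\ref{lem:friedrichs}: writing $y\in\R^4$, $r=|y|$, the functions $\tilde G(y,z)=G(|y|,z)$ and $\tilde U(y,z)=U(|y|,z)$ are radial in $y$. Up to the constant $|S^3|$, the norms $\|G\|_{L^2(\dd\muFive)}$ and $\|U\|_{L^{10/3}(\dd\muFive)}$ coincide with the ordinary Lebesgue norms $\|\tilde G\|_{L^2(\R^5)}$ and $\|\tilde U\|_{L^{10/3}(\R^5)}$. The operator $\Delta_5$ acts on such functions as the genuine Laplacian of $\R^5$. Hence $(-\Delta_5)^{-1}G$ is the Newtonian potential $c_5|x|^{-3}*\tilde G$, which is again radial in $y$, and $\partial_z$ is an honest Cartesian derivative in $\R^5$. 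So (6.1) reduces to a flat Euclidean statement for the Fourier multiplier $\partial_z(-\Delta)^{-1}$, and no axis boundary terms arise.

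Next we factor
\[
\partial_z(-\Delta)^{-1}=R_z\,(-\Delta)^{-1/2},\qquad R_z=\partial_z(-\Delta)^{-1/2},
\]
where $R_z$ is the Riesz transform in the $z$ direction. We then bound the two factors separately.
\begin{enumerate}
\item The Riesz potential $I_1=(-\Delta)^{-1/2}$ on $\R^5$ has kernel $c|x|^{-4}$. By Hardy--Littlewood--Sobolev with $\tfrac1q=\tfrac1p-\tfrac15$, we get $L^2\to L^{10/3}$, since $\tfrac12-\tfrac15=\tfrac{3}{10}$.
\item The Riesz transform $R_z$ is bounded on $L^{10/3}(\R^5)$ by Calder\'on--Zygmund theory.
\end{enumerate}
Composing these gives $\|\tilde U\|_{L^{10/3}}\le C\|\tilde G\|_{L^2}$, and (6.1) follows.

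Alternatively, one can differentiate the kernel directly: $\partial_z|x|^{-3}=-3z|x|^{-5}$, so $|\partial_z|x|^{-3}|\le3|x|^{-4}$. Then $|\tilde U|\le C\,I_1|\tilde G|$ pointwise, and HLS applies without invoking Calder\'on--Zygmund at all. I would use this version, since it is more elementary.

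Scaling gives a consistency check. By (2.6), $G$ scales like $\lambda^3$ and $U$ like $\lambda^2$. In five dimensions $\|G_\lambda\|_{L^2}$ carries the factor $\lambda^{3-5/2}=\lambda^{1/2}$, and $\|U_\lambda\|_{L^{10/3}}$ carries $\lambda^{2-3/2}=\lambda^{1/2}$. The two match, so the exponent $10/3$ is forced.

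For the local statement, we write $G=\phi G+(1-\phi)G$ with $\phi$ a cutoff equal to $1$ on the region of interest. The global estimate applies to the localized source $\phi G$. The remaining part of $U$ is $\partial_z$ of a function that is $\Delta_5$-harmonic on the inner region. Interior estimates for harmonic functions in $\R^5$, again via the lift, bound it in any $L^\infty$ norm on a smaller region by its $L^1$ or $L^2$ size on the larger one.

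The main technical point is justifying the identification of the lifted $U$ with the $\R^5$ potential. Two things must be checked. First, that $\tilde\psi=(-\Delta)^{-1}\tilde G$ is the correct stream function: it is the unique decaying solution, and by Lemma~\ref{lem:axis-capacity} the axis $\{y=0\}$ is removable, so no extra singular solutions supported on the axis can appear. Second, that the relation $u^r=-r\psi_z$ from Proposition~\ref{prop:lifted-system} is consistent with this choice. Compact support and smoothness of $G$ make both routine.
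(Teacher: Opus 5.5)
Your proposal is correct and follows essentially the same route as the paper. Both arguments lift $G$ to a $y$-radial function on $\R^5$ and treat $\partial_z(-\Delta)^{-1}$ as an order-one Riesz potential combined with an order-zero factor, apply HLS with $\frac1q=\frac12-\frac15$, and obtain the local version by splitting off a smooth harmonic remainder; your pointwise kernel bound $\lvert\partial_z|x|^{-3}\rvert\le 3|x|^{-4}$ is a valid, slightly more elementary way to dispose of the order-zero factor without Calder\'on--Zygmund theory.
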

\begin{proof}
Lift $G(r,z)$ to a radial function on $\R^5$.  The operator $\nabla(-\Delta)^{-1}$ is the Riesz potential of order one followed by an order-zero angular factor.  The Hardy--Littlewood--Sobolev relation in five dimensions gives
\[
\frac1q=\frac12-\frac15=\frac3{10},
\]
so $q=10/3$.  Restricting back to radial functions and suppressing the constant $|S^3|$ gives (6.1).  For a localized estimate, write $G=\chi G+(1-\chi)G$.  The first term is handled by the whole-space estimate; the Newtonian potential of the separated second term is smooth on the inner cylinder and is absorbed into the harmonic remainder.
\end{proof}

There is also a direct physical three-dimensional recovery.

\medskip
\noindent\textbf{Meridional div--curl recovery.}
\begin{proposition}\label{prop:physical-recovery}
For a smooth finite-energy axisymmetric meridional field $b=(u^r,u^z)$ on the whole space,
\[
\|\omega^\theta\|_{L^2(\muThree)}^2
=\|G\|_{L^2(\muFive)}^2.
\]
Moreover, after restoring the harmless $2\pi$ angular factor,
\[
\|\nabla b\|_{L^2(\R^3)}\le C\|\omega^\theta\|_{L^2(\R^3)},
\qquad
\|b\|_{L^6(\R^3)}\le C\|\omega^\theta\|_{L^2(\R^3)}.
\]
\end{proposition}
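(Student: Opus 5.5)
The first identity is pointwise bookkeeping between the two measures. By definition $G=\omega^\theta/r$, so $|\omega^\theta|^2 r = |G|^2 r^3$ pointwise on the meridional half-plane. Integrating against $\dd r\dd z$ gives
\[
\int|\omega^\theta|^2\,\dd\muThree=\int|G|^2r^2\,r\,\dd r\dd z=\int|G|^2\,\dd\muFive .
\]
No regularity at the axis is needed beyond finiteness of either side. Smoothness of the axisymmetric field gives $\omega^\theta=O(r)$ near $r=0$, so $G$ is bounded there and both integrals converge near the axis.

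For the gradient bound I would pass to the physical field $B=u^re_r+u^ze_z$ on $\R^3$. I will check three facts about $B$:
\begin{itemize}
\item $B$ is divergence free, by the incompressibility identity in cylindrical form.
\item $\operatorname{curl}B=\omega^\theta e_\theta$, because the swirl part $u^\theta e_\theta$ contributes only the meridional components of the vorticity.
\item $\|\nabla B\|_{L^2(\R^3)}^2$ controls $\int(|\partial_ru^r|^2+|\partial_zu^r|^2+|\partial_ru^z|^2+|\partial_zu^z|^2+|u^r/r|^2)\,\dd\muThree$ up to the factor $2\pi$, so the meridional gradient is dominated by the full Cartesian gradient of $B$.
\end{itemize}
For smooth divergence-free fields decaying at infinity, integrating by parts twice gives the standard identity
\[
\|\nabla B\|_{L^2}^2=\|\operatorname{div}B\|_{L^2}^2+\|\operatorname{curl}B\|_{L^2}^2 ,
\]
that is, $-\Delta=\operatorname{curl}\operatorname{curl}-\nabla\operatorname{div}$ paired with $B$. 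With $\operatorname{div}B=0$ this becomes $\|\nabla B\|_{L^2}=\|\omega^\theta\|_{L^2(\R^3)}$, so the constant is $C=1$ in physical variables. Restoring the $2\pi$ factor converts this to the meridional statement.

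The $L^6$ bound then follows from the three-dimensional Sobolev inequality $\|B\|_{L^6}\le C\|\nabla B\|_{L^2}$, which applies to finite-energy fields vanishing at infinity, together with $|b|=|B|$ pointwise.

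The main obstacle is justifying the integrations by parts with only finite energy. I would first prove the identity for $B$ smooth and compactly supported, then approximate. Take $B_R=\phi_R B$ with a correction restoring zero divergence, for instance a Bogovskii correction or cutting off the stream function $\psi$ instead. Cutting off $\psi$ is the cleaner route, since $u^r=-r\psi_z$ and $u^z=2\psi+r\psi_r$ keep the field axisymmetric and divergence free automatically. The error terms then vanish as $R\to\infty$ because $B\in L^2$ and $\nabla B\in L^2$. No axis issue arises, since $B$ is a smooth field on $\R^3$ and the axis is not a boundary; this mirrors the role of the zero-capacity Lemma~\ref{lem:axis-capacity} in the lifted picture.
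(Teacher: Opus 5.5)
Your proposal is correct and follows the same route as the paper. It uses the pointwise identity $|\omega^\theta|^2r=|G|^2r^3$, then the div--curl identity $\|\nabla B\|_{2}^2=\|\operatorname{div}B\|_{2}^2+\|\operatorname{curl}B\|_{2}^2=\|\omega^\theta\|_{2}^2$ (the paper reads this off in Fourier variables, you integrate by parts), and finally the embedding $\dot H^1(\R^3)\hookrightarrow L^6(\R^3)$.

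Your approximation step is more careful than the paper's, but no divergence-restoring correction is needed. The div--curl identity holds for every compactly supported smooth field, so you can apply it directly to $\phi_RB$ and let $R\to\infty$. This uses only $B\in L^2$ and $\omega^\theta\in L^2$, rather than assuming $\nabla B\in L^2$ in advance.
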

\begin{proof}
Since $\omega^\theta=rG$,
\[
\int|\omega^\theta|^2r\,\dd r\dd z
=\int|G|^2r^3\,\dd r\dd z.
\]
This is the first identity.  The three-dimensional field $b$ is divergence free and its curl has only azimuthal component $\omega^\theta$.  The standard Fourier div--curl identity gives
\[
\|\nabla b\|_2^2=\|\operatorname{curl}b\|_2^2+\|\operatorname{div}b\|_2^2
=\|\omega^\theta\|_2^2.
\]
The Sobolev embedding $\dot H^1(\R^3)\hookrightarrow L^6(\R^3)$ then yields the second inequality.  Local versions follow by applying the same argument to a cutoff and retaining the commutator as a smooth exterior/harmonic term.
\end{proof}

\section{The swirl source and a logarithmic interpolation estimate}
The source in (2.3) is $\partial_z(F^2)$.  Its natural mass is not mysterious:
\[
\int F^2\,\dd\muFive
=\int\frac{\Gamma^2}{r}\,\dd r\dd z
=\int|u^\theta|^2\,\dd\muThree.
\tag{7.1}
\]
Thus finite kinetic energy controls the total $L^1(\dd\muFive)$ mass of $F^2$.  To make the source vanish at a selected small scale, however, one needs more than a uniform bound.  The following elementary logarithmic estimate quantifies the useful combination of small circulation oscillation and weighted radial energy.

\medskip
\noindent\textbf{Logarithmic source-mass interpolation.}
\begin{lemma}\label{lem:log-interp}
Let $f(0)=0$, $|f(r)|\le\omega$, and
\[
E=\int_0^1\frac{|f_r|^2}{r}\,\dd r<\infty.
\]
Then
\[
\boxed{
\int_0^1\frac{|f|^2}{r}\,\dd r
\le \omega^2\left[\frac14+\frac12\log\left(1+\frac{E}{\omega^2}\right)\right],}
\tag{7.2}
\]
with the usual interpretation when $\omega=0$.
\end{lemma}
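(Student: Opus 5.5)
The plan is to split the interval $(0,1)$ at a radius $\rho\in(0,1]$, to be chosen later. Near the axis I will use the pointwise bound coming from $f(0)=0$ and the weighted energy $E$. Away from the axis I will use the oscillation bound $|f|\le\omega$. Optimizing in $\rho$ should produce exactly the logarithm in (7.2).

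\emph{Step 1 (near the axis).} I reuse the Cauchy--Schwarz step from the proof of Lemma~\ref{lem:hardy}. Since $f(0)=0$,
\[
|f(r)|^2\le\frac{r^2}{2}\int_0^r\frac{|f_s|^2}{s}\,\dd s\le\frac{r^2}{2}E .
\]
Dividing by $r$ and integrating over $(0,\rho)$ gives
\[
\int_0^\rho\frac{|f|^2}{r}\,\dd r\le\frac{E}{2}\int_0^\rho r\,\dd r=\frac{E\rho^2}{4}.
\]

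\emph{Step 2 (away from the axis).} On $(\rho,1)$ I simply use $|f|^2\le\omega^2$, which gives
\[
\int_\rho^1\frac{|f|^2}{r}\,\dd r\le\omega^2\log\frac1\rho .
\]

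\emph{Step 3 (choice of $\rho$).} For $\omega>0$ I take
\[
\rho^2=\frac{\omega^2}{\omega^2+E}\in(0,1],
\]
so that $\rho$ is admissible. Then the near-axis contribution is
\[
\frac{E\rho^2}{4}=\frac{\omega^2}{4}\cdot\frac{E}{\omega^2+E}\le\frac{\omega^2}{4},
\]
and the far contribution is
\[
\omega^2\log\frac1\rho=\frac{\omega^2}{2}\log\Bigl(1+\frac{E}{\omega^2}\Bigr).
\]
Adding the two bounds gives (7.2).

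For $\omega=0$ the hypothesis forces $f\equiv0$, so both sides vanish. This is the "usual interpretation" in the statement, and it is also the limit of the right-hand side as $\omega\to0$, because $\omega^2\log(1+E/\omega^2)\to0$.

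There is no genuine obstacle in this argument. The only point requiring care is to choose $\rho$ so that it stays in $(0,1]$ while the near-axis term is bounded by the constant $\frac14\omega^2$. The choice $\rho^2=\omega^2/(\omega^2+E)$, rather than the naive $\rho^2=\omega^2/E$, is what achieves both and produces the $1+E/\omega^2$ inside the logarithm.
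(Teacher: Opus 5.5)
Your proof is correct and follows the paper's argument: you split $(0,1)$ at a radius $\rho$, use the Cauchy--Schwarz bound $|f(r)|^2\le r^2E/2$ near the axis, and use $|f|\le\omega$ away from it. The only difference is your cut-off $\rho^2=\omega^2/(\omega^2+E)$ in place of the paper's $\rho=\min(1,\omega/\sqrt{E})$; this avoids the case split $E\le\omega^2$ versus $E>\omega^2$ (and the $E=0$ case) and produces $\log(1+E/\omega^2)$ directly rather than via $\log(E/\omega^2)\le\log(1+E/\omega^2)$.
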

\begin{proof}
If $\omega=0$ there is nothing to prove.  By the Cauchy--Schwarz estimate used in Lemma~\ref{lem:hardy},
\[
|f(r)|^2\le\frac{r^2E}{2}.
\]
Choose
\[
\rho=\min\left(1,\frac{\omega}{\sqrt E}\right)
\]
when $E>0$.  On $(0,\rho)$,
\[
\int_0^\rho\frac{|f|^2}{r}\,\dd r
\le\frac E2\int_0^\rho r\,\dd r
=\frac{E\rho^2}{4}\le\frac{\omega^2}{4}.
\]
On $(\rho,1)$ use $|f|\le\omega$:
\[
\int_\rho^1\frac{|f|^2}{r}\,\dd r
\le\omega^2\log\frac1\rho.
\]
If $E\le\omega^2$, then $\rho=1$ and the second term vanishes.  If $E>\omega^2$, then
\[
\log\frac1\rho=\frac12\log\frac E{\omega^2}
\le\frac12\log\left(1+\frac E{\omega^2}\right).
\]
Combining the two regions proves (7.2).
\end{proof}

The one-dimensional bound has a useful spacetime form.

\medskip
\noindent\textbf{Integrated logarithmic source criterion.}
\begin{proposition}\label{prop:integrated-log}
Let $I\subset\R$ and $Z\subset\R$ be bounded intervals, put $L=|I||Z|$, and suppose
\[
\Gamma(0,z,t)=0,
\qquad
|\Gamma(r,z,t)|\le\omega
\]
for $0<r<1$, $(z,t)\in Z\times I$.  Define
\[
E=\int_I\int_Z\int_0^1
\frac{|\partial_r\Gamma|^2}{r}\,\dd r\dd z\dd t.
\]
Then
\[
\boxed{
\int_I\int_Z\int_0^1\frac{\Gamma^2}{r}\,\dd r\dd z\dd t
\le
L\omega^2\left[
\frac14+\frac12\log\left(1+\frac{E}{L\omega^2}\right)
\right].}
\tag{7.3}
\]
Consequently, for a sequence with uniformly bounded $I,Z$, the condition
\[
\omega_j^2\left[
1+\log\left(1+\frac{E_j}{\omega_j^2}\right)
\right]\longrightarrow0
\tag{7.4}
\]
implies $F_j^2\to0$ in local $L^1(\dd\muFive\dd t)$.
\end{proposition}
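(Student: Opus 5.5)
The plan is to apply Lemma~\ref{lem:log-interp} pointwise in $(z,t)$ and then integrate, using concavity of the logarithm to pass the integral inside. For each $(z,t)\in Z\times I$, set $f(r)=\Gamma(r,z,t)$ and
\[
e(z,t)=\int_0^1\frac{|\partial_r\Gamma|^2}{r}\,\dd r .
\]
The hypotheses $f(0)=0$ and $|f|\le\omega$ are exactly those of Lemma~\ref{lem:log-interp}. It gives
\[
\int_0^1\frac{\Gamma^2}{r}\,\dd r\le\omega^2\left[\tfrac14+\tfrac12\log\left(1+\tfrac{e(z,t)}{\omega^2}\right)\right]
\]
for a.e.\ $(z,t)$. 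If $E=\infty$ there is nothing to prove, so assume $E<\infty$. Then $e(z,t)<\infty$ for a.e.\ $(z,t)$, and the lemma applies on a full-measure set.

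Next integrate over $Z\times I$. The constant term contributes $L\omega^2/4$. For the logarithmic term, $s\mapsto\log(1+s/\omega^2)$ is concave on $[0,\infty)$. Jensen's inequality with respect to the normalized measure $\dd z\,\dd t/L$ therefore gives
\[
\frac1L\int_I\int_Z\log\left(1+\frac{e}{\omega^2}\right)\dd z\dd t\le\log\left(1+\frac{1}{L\omega^2}\int_I\int_Z e\,\dd z\dd t\right)=\log\left(1+\frac{E}{L\omega^2}\right).
\]
Multiplying by $L\omega^2/2$ and adding the constant term yields (7.3). The case $\omega=0$ is trivial, since then $\Gamma\equiv0$ on the region.

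For the consequence, note that by (7.1), or directly from $F^2r^3=\Gamma^2/r$, the left side of (7.3) equals $\int_I\int_Z\int_0^1F^2\,\dd\muFive\,\dd t$. With $L_j\le L_0$ uniformly bounded, we need to compare $L\omega^2\log(1+E/(L\omega^2))$ with $\omega^2\log(1+E/\omega^2)$. Two cases arise.
\begin{itemize}[label={}]
\item If $L\ge1$, then $L\omega^2\log(1+E/(L\omega^2))\le L_0\,\omega^2\log(1+E/\omega^2)$, because the logarithm is monotone.
\item If $L<1$, use the elementary inequality $L\log(1+x/L)\le\log(1+x)+L\log(1/L)$, which follows from $1+x/L\le(1+x)/L$. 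The extra term $\omega^2L\log(1/L)\le\omega^2/e$ is bounded by a multiple of $\omega^2$, and this term tends to zero under (7.4).
\end{itemize}
In both cases the local $L^1(\dd\muFive\dd t)$ norm of $F_j^2$ on the unit-radius cylinder over $Z\times I$ is bounded by $C(L_0)\,\omega_j^2[1+\log(1+E_j/\omega_j^2)]$, which tends to zero. For general compact subsets of the lifted region, I would apply this on a finite cover by such cylinders. This presumes the hypotheses are imposed on each, which I read as the intended meaning of ``local''.

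The main obstacle is bookkeeping rather than analysis: the Jensen step, and the correct handling of $L$ in the reduction from (7.3) to (7.4), where the rescaled argument $E/(L\omega^2)$ must be compared with $E/\omega^2$. The measurability of $e(z,t)$ and the a.e.\ validity of Lemma~\ref{lem:log-interp} follow from Fubini, given $E<\infty$ and absolute continuity of $\Gamma$ in $r$, which holds for smooth solutions.
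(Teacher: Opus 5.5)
Your proposal is correct and follows the paper's proof: apply Lemma~\ref{lem:log-interp} for each fixed $(z,t)$, integrate over $Z\times I$, and use Jensen's inequality for the concave map $x\mapsto\log(1+x)$ to obtain (7.3). Your case analysis comparing $L\log\bigl(1+E/(L\omega^2)\bigr)$ with $\log\bigl(1+E/\omega^2\bigr)$, using $1+x/L\le(1+x)/L$ when $L<1$, makes explicit a step that the paper leaves implicit in passing from (7.3) to (7.4).
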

\begin{proof}
For each fixed $(z,t)$ apply Lemma~\ref{lem:log-interp} with
\[
e(z,t)=\int_0^1\frac{|\partial_r\Gamma(r,z,t)|^2}{r}\,\dd r.
\]
This gives
\[
\int_0^1\frac{\Gamma^2}{r}\,\dd r
\le\omega^2\left[
\frac14+\frac12\log\left(1+\frac{e(z,t)}{\omega^2}\right)
\right].
\]
Integrate over $Z\times I$.  Since $x\mapsto\log(1+x)$ is concave, Jensen's inequality yields
\[
\frac1L\int_{I\times Z}
\log\left(1+\frac{e(z,t)}{\omega^2}\right)\dd z\dd t
\le
\log\left(1+\frac{E}{L\omega^2}\right).
\]
This proves (7.3).  Finally
\[
\int F^2\,\dd\muFive
=\int\frac{\Gamma^2}{r}\,\dd r\dd z,
\]
so (7.4) gives the stated source-mass convergence.
\end{proof}

Thus the logarithmic factor in the corridor reduction is not a formal correction: it is exactly the price of combining a small circulation amplitude with a possibly large weighted radial derivative energy.

\subsection{Negative-norm control of the differentiated source}
The derivative in (2.3) costs no additional spatial integrability if one works in a sufficiently negative Sobolev space.

\medskip
\noindent\textbf{Negative-norm source estimate.}
\begin{lemma}\label{lem:negative-source}
Let $m>7/2$.  For a compactly supported radial five-dimensional density $F^2$,
\[
\boxed{
\|\partial_z(F^2)\|_{H^{-m}(\R^5)}
\le C_m\|F^2\|_{L^1(\R^5)}.}
\tag{7.3}
\]
\end{lemma}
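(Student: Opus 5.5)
We argue by duality in $\R^5$, using the Fourier characterization of $H^{-m}$. Write $\rho=F^2\ge0$, regarded as a compactly supported radial density on $\R^5$ (radial in $y\in\R^4$). By Plancherel,
\[
\|\partial_z\rho\|_{H^{-m}(\R^5)}^2=\int_{\R^5}\frac{|\zeta|^2\,|\hat\rho(\xi)|^2}{(1+|\xi|^2)^{m}}\,\dd\xi,
\]
where $\xi=(\eta,\zeta)\in\R^4\times\R$ is the frequency variable and $\zeta$ the component dual to $z$. Since $\rho\in L^1$, we have $|\hat\rho(\xi)|\le\|\rho\|_{L^1(\R^5)}$ for every $\xi$. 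Together with $|\zeta|\le|\xi|$ this gives
\[
\|\partial_z\rho\|_{H^{-m}}^2\le\|\rho\|_{L^1}^2\int_{\R^5}\frac{|\xi|^2}{(1+|\xi|^2)^m}\,\dd\xi .
\]

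The remaining step is to check that the last integral is finite. In polar coordinates the integrand is comparable to $|\xi|^{2-2m}|\xi|^4$ for large $|\xi|$. The integral therefore converges exactly when $2m-6>1$, that is, when $m>7/2$, which is precisely the hypothesis. Taking square roots gives the estimate with
\[
C_m=\Bigl(\int_{\R^5}|\xi|^2(1+|\xi|^2)^{-m}\,\dd\xi\Bigr)^{1/2}.
\]

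An equivalent route is the dual pairing $\langle\partial_z\rho,\varphi\rangle=-\int\rho\,\partial_z\varphi$ for $\varphi\in H^m$. Bounding this by $\|\rho\|_{L^1}\|\partial_z\varphi\|_{L^\infty}$ and applying the Sobolev embedding $H^{m-1}(\R^5)\hookrightarrow L^\infty$, which needs $m-1>5/2$, gives the same threshold $m>7/2$. I would present the Fourier computation as the main argument and mention the embedding as a remark.

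There is no genuine obstacle here. The only points needing care are these:
\begin{itemize}
\item The threshold $7/2$ is sharp for this method. It comes from $L^1\subset\mathcal M$ combined with one derivative in five dimensions, so the exponent counting has to be done correctly.
\item Compact support is used only to make $\rho$ a tempered distribution in $L^1$. The constant does not depend on the support.
\item For a radial lift, the $L^1(\R^5)$ norm equals $|S^3|\int F^2\,\dd\muFive$. This ties the estimate to the energy identity (7.1).
\end{itemize}
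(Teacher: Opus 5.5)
Your proof is correct. Your main argument differs from the paper's, although the two are close relatives.

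You work on the Fourier side. You compute $\|\partial_z\rho\|_{H^{-m}}$ directly by Plancherel and bound $|\hat\rho|$ pointwise by $\|\rho\|_{L^1}$. That reduces everything to the finiteness of $\int_{\R^5}|\xi|^2(1+|\xi|^2)^{-m}\,\dd\xi$, which holds exactly when $m>7/2$.

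The paper uses the route you relegate to a remark. It pairs $\partial_z(F^2)$ against $\varphi\in H^m$ and bounds the pairing by $\|F^2\|_{L^1}\|\partial_z\varphi\|_{L^\infty}$. It then cites the embedding $H^{m-1}(\R^5)\hookrightarrow L^\infty$.

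The two arguments are dual presentations of one computation. The standard proof of that embedding is the Cauchy--Schwarz step $\|\partial_z\varphi\|_\infty\le\int|\zeta||\hat\varphi|\,\dd\xi\le C_m\|\varphi\|_{H^m}$, with exactly your constant $C_m$.

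What each route buys:
\begin{itemize}
\item The paper's version is shorter, because it outsources the Fourier estimate to a quoted embedding.
\item Your version is self-contained and gives $C_m$ explicitly.
\item Your version also makes it transparent that $7/2$ is sharp for the inequality itself, not just for the method. Test it on nonnegative, compactly supported approximations of a Dirac mass at a point of the axis, which are radial in $y$. By Fatou the left side blows up when $m\le7/2$.
\end{itemize}

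As you observe, neither radiality, compact support, nor $F^2\ge0$ is actually used.
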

\begin{proof}
For $\varphi\in H^m(\R^5)$,
\[
|\langle\partial_z(F^2),\varphi\rangle|
=\left|\int F^2\partial_z\varphi\right|
\le\|F^2\|_1\|\partial_z\varphi\|_\infty.
\]
Since $m>5/2+1$, Sobolev embedding gives
\[
\|\partial_z\varphi\|_\infty\le C_m\|\varphi\|_{H^m}.
\]
Taking the supremum over unit $H^m$ test functions proves (7.3).
\end{proof}

\medskip
\noindent\textbf{Source decoupling criterion.}
\begin{corollary}\label{cor:source-decouple}
If $F_j^2\to0$ in $L^1_{\rm loc}(\dd\muFive\dd t)$ on a sequence of normalized cylinders, then
\[
\partial_z(F_j^2)\to0
\quad\text{in }L^1_tH^{-m}_{\rm loc}(\R^5)
\]
for every $m>7/2$.
\end{corollary}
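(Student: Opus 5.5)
The plan is to apply Lemma~\ref{lem:negative-source} slice by slice in time and then integrate. The only preliminary work is to localize the source to a compact set in the lifted variables.

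First I fix the localization. Take a normalized cylinder $Q=\{r<R,\ |z|<R\}\times I$ and a cutoff $\phi(y,z)$ that is smooth, radial in $y\in\R^4$, and compactly supported in the lifted ball. Local convergence in $L^1_tH^{-m}_{\rm loc}$ means that $\phi\,\partial_z(F_j^2)\to0$ in $L^1(I;H^{-m}(\R^5))$ for every such $\phi$. Equivalently, for every $\varphi\in H^m(\R^5)$ we may test against $\phi\varphi$. Writing
\[
\phi\,\partial_z(F_j^2)=\partial_z(\phi F_j^2)-(\partial_z\phi)F_j^2,
\]
the first term is exactly the object in Lemma~\ref{lem:negative-source}, applied to the compactly supported radial density $\phi F_j^2$. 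The second term is an $L^1$ function. It lies in $H^{-m}$ by the dual embedding $L^1\subset H^{-m}$, valid for $m>5/2$, with norm at most $C\|\partial_z\phi\|_\infty\|F_j^2\|_{L^1(\operatorname{supp}\phi)}$.

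Next comes the slicewise estimate. For almost every $t\in I$, Lemma~\ref{lem:negative-source} gives
\[
\|\phi\,\partial_z(F_j^2)(t)\|_{H^{-m}}\le C_{m,\phi}\int_{\operatorname{supp}\phi}F_j^2(t)\,\dd y\,\dd z .
\]
The lifted Lebesgue measure $\dd y\,\dd z$ equals $|S^3|\,\dd\muFive$ for radial functions, so the right-hand side is a constant times $\int F_j^2(t)\,\dd\muFive$ over the bounded meridional cylinder. Integrating in $t$ over $I$ bounds the $L^1_tH^{-m}$ norm by $C\|F_j^2\|_{L^1(\dd\muFive\dd t)}$ on a fixed cylinder. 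By hypothesis this tends to zero, and since $\phi$ was arbitrary the corollary follows.

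The main obstacle is a point of justification rather than an estimate. Both $F_j^2$ and $\partial_z(F_j^2)$ must be measurable as maps into $H^{-m}$, and the pointwise-in-time use of the lemma must be legitimate. I will handle this by working with the smooth solutions and their compactly supported truncations, so that each time slice is a genuine compactly supported $L^1$ density. The $L^1_t$ integrability of the bound then comes directly from the $L^1(\dd\muFive\dd t)$ hypothesis via Fubini.
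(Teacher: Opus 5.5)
Your proposal is correct and follows the paper's proof, which simply applies Lemma~\ref{lem:negative-source} at almost every time and integrates in $t$. Your cutoff decomposition $\phi\,\partial_z(F_j^2)=\partial_z(\phi F_j^2)-(\partial_z\phi)F_j^2$, with the commutator term handled by $L^1\subset H^{-m}$, makes explicit the localization the paper leaves implicit: the lemma is stated for compactly supported densities, whereas the hypothesis is only local.
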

\begin{proof}
Apply Lemma~\ref{lem:negative-source} at almost every time and integrate.
\end{proof}

\section{Compactness of the lifted vorticity equation}
We isolate a standard compactness statement in a form adapted to (2.3).  The assumptions are written explicitly because the five-dimensional lift does not make the transport term disappear.

\medskip
\noindent\textbf{Local compactness with vanishing source.}
\begin{proposition}\label{prop:G-compactness}
Let $Q_1$ be a fixed lifted cylinder and let $(G_j,b_j)$ solve
\[
\partial_tG_j+b_j\cdot\nabla G_j
=\nu\Delta_5G_j+S_j.
\]
Assume on every $Q_\rho\Subset Q_1$ that
\[
\sup_j\left(
\|G_j\|_{L^\infty_tL^2_x}
+\|\nabla G_j\|_{L^2_{t,x}}
+\|b_j\|_{L^2_tL^{10/3}_x}
+\|\operatorname{div}_5b_j\|_{L^2_tL^{10/3}_x}
\right)<\infty,
\tag{8.1}
\]
and that $S_j\to0$ in $L^1_tH^{-m}_x$ for some sufficiently large $m$.  Then, after passage to a subsequence,
\[
G_j\to G_\infty\quad\text{strongly in }L^2_{\rm loc}(Q_1),
\]
and the limit satisfies the source-free equation
\[
\partial_tG_\infty+b_\infty\cdot\nabla G_\infty
=\nu\Delta_5G_\infty
\]
in distributions, provided $b_j\to b_\infty$ strongly in the local topology needed for the product.
\end{proposition}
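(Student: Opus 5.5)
The argument is an Aubin--Lions compactness step followed by passage to the limit in the weak formulation. Throughout, we lift to $\R^5$, so that $\Delta_5$ is the ordinary Laplacian. We fix a slightly smaller cylinder $Q_\rho\Subset Q_1$ and a spatial cutoff $\chi$ equal to one on $Q_{\rho/2}$. By (8.1), $\chi G_j$ is bounded in $L^2_tH^1_x$. The plan is to show that $\partial_t(\chi G_j)$ is bounded in $L^1_tH^{-m}_x$ for $m$ large. Once that is done, the Aubin--Lions--Simon lemma with the chain $H^1\Subset L^2\hookrightarrow H^{-m}$ gives strong precompactness of $\chi G_j$ in $L^2_{t,x}$. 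A diagonal argument over an exhausting sequence of cylinders then yields $G_j\to G_\infty$ in $L^2_{\rm loc}(Q_1)$.

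The time-derivative bound is read off the equation term by term:
\[
\partial_t(\chi G_j)=\nu\chi\Delta_5G_j-\chi\, b_j\cdot\nabla G_j+\chi S_j .
\]
\begin{itemize}
\item The diffusion term is bounded in $L^2_tH^{-1}_x$, since $\nabla G_j\in L^2_{t,x}$.
\item The source term is bounded in $L^1_tH^{-m}_x$ by hypothesis; it even tends to zero.
\item For the transport term, write $b_j\cdot\nabla G_j=\operatorname{div}_5(b_jG_j)-G_j\operatorname{div}_5b_j$. By H\"older with exponents $\tfrac1{2}+\tfrac{3}{10}=\tfrac45$, the product $b_jG_j$ and the product $G_j\operatorname{div}_5 b_j$ are bounded in $L^2_tL^{5/4}_x$, using $G_j\in L^\infty_tL^2_x$ and $b_j,\operatorname{div}_5b_j\in L^2_tL^{10/3}_x$. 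On $\R^5$ the embedding $L^{5/4}\hookrightarrow H^{-s}$ holds for $s\ge 5(\tfrac45-\tfrac12)=\tfrac32$. Hence $\operatorname{div}_5(b_jG_j)$ is bounded in $L^2_tH^{-5/2}_x$, and $G_j\operatorname{div}_5b_j$ is bounded in $L^2_tH^{-3/2}_x$.
\end{itemize}
Taking $m\ge 7/2$ (consistent with Corollary~\ref{cor:source-decouple}), all three pieces lie in $L^1_tH^{-m}_x$ uniformly in $j$. The commutator terms created by $\chi$ are of lower order and are handled the same way.

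To pass to the limit, we test against $\varphi\in C_c^\infty(Q_1)$ and use the weak form
\[
-\int G_j\partial_t\varphi-\int G_j\, b_j\cdot\nabla\varphi-\int G_j\operatorname{div}_5b_j\,\varphi=\nu\int G_j\Delta_5\varphi+\langle S_j,\varphi\rangle .
\]
The linear terms converge by the strong $L^2$ convergence of $G_j$, and $\langle S_j,\varphi\rangle\to0$. The term $\int G_jb_j\cdot\nabla\varphi$ converges because $G_j\to G_\infty$ strongly in $L^2$ while $b_j\to b_\infty$ at least weakly in $L^2_tL^{10/3}_x$; a strong-times-weak product converges on compact support. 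The term with $\operatorname{div}_5b_j$ is the main obstacle. It is exactly here that the proviso ``$b_j\to b_\infty$ strongly in the local topology needed for the product'' is used. We interpret it as convergence of $\operatorname{div}_5b_j$ in $L^2_tL^{2}_{x,\rm loc}$ (or weak $L^2$ convergence paired with strong convergence of $G_j$ in $L^2_tL^{2}$); either version suffices for the product $G_j\operatorname{div}_5b_j\varphi$ to converge. We would try first to show that weak convergence of $\operatorname{div}_5b_j$ in $L^2_tL^{10/3}_x$, combined with strong convergence of $G_j$ in $L^2_tL^{10/7}_{x,\rm loc}$, is enough. The latter follows from strong $L^2$ convergence on bounded sets. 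If that argument works, the extra hypothesis reduces to identifying the limits.

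Finally, we recombine the terms into $b_\infty\cdot\nabla G_\infty$, using $G_\infty\in L^2_tH^1_x$ (by weak lower semicontinuity). This gives the source-free equation in distributions.
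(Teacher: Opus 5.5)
Your proposal is correct and follows the paper's proof essentially step for step. You localize, split the transport term as $\operatorname{div}_5(b_jG_j)-G_j\operatorname{div}_5b_j$ to bound $\partial_tG_j$ uniformly in $L^1_tH^{-m}_x$, apply Aubin--Lions--Simon with $H^1\Subset L^2\hookrightarrow H^{-m}$ together with a diagonal argument, and pass to the limit in the weak form. The differences are only cosmetic: you place the products in $L^2_tL^{5/4}_x$ using $G_j\in L^\infty_tL^2_x$ rather than in $L^1_tL^{5/3}_x$ via the Sobolev bound $G_j\in L^2_tL^{10/3}_x$, and you check explicitly that weak convergence of $b_j$ and $\operatorname{div}_5b_j$ in $L^2_tL^{10/3}_x$, paired with strongly convergent $G_j$, already suffices for the limit (it does), which the paper simply leaves to the stated proviso.
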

\begin{proof}
Fix $Q_\rho\Subset Q_1$ and a cutoff supported in a slightly larger cylinder.  The diffusion term is bounded in $L^2_tH^{-1}_x$ by the $L^2$ gradient bound.  Write
\[
b_j\cdot\nabla G_j
=\operatorname{div}(b_jG_j)-G_j\operatorname{div}b_j.
\]
By the five-dimensional Sobolev embedding,
\[
\|G_j\|_{L^2_tL^{10/3}_x}
\le C\|G_j\|_{L^2_tH^1_x}.
\]
Hence $b_jG_j$ and $G_j\operatorname{div}b_j$ are bounded in $L^1_tL^{5/3}_x$.  On a bounded domain, $L^{5/3}$ and its first derivatives embed continuously into $H^{-m}$ for $m$ large.  Therefore $\partial_tG_j$ is bounded in $L^1_tH^{-m}_x$.

The embedding $H^1(Q_\rho)\Subset L^2(Q_\rho)\hookrightarrow H^{-m}(Q_\rho)$ is compact-continuous.  The Aubin--Lions--Simon compactness theorem then gives strong $L^2$ convergence on $Q_\rho$.  A diagonal argument covers all compact subcylinders.  The source converges to zero by hypothesis, the diffusion term passes weakly, and the transport term passes under the stated convergence of $b_j$.  This gives the source-free limit equation.
\end{proof}

\section{Identification of the no-swirl endpoint}
The source-decoupling condition also removes the swirl velocity itself in the natural local energy topology.

\medskip
\noindent\textbf{Vanishing source mass implies vanishing swirl energy.}
\begin{lemma}\label{lem:swirl-vanish}
On every meridional set $E$,
\[
\int_E|u^\theta|^2\,\dd\muThree
=\int_EF^2\,\dd\muFive.
\tag{9.1}
\]
Hence $F_j^2\to0$ in local $L^1(\dd\muFive\dd t)$ implies
\[
u_j^\theta\to0
\quad\text{strongly in }L^2_{\rm loc}(\dd\muThree\dd t).
\]
\end{lemma}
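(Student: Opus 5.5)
The identity (9.1) is a pointwise computation, so the plan is to prove it first and then read off the convergence statement. Since $F=u^\theta/r$, we have $F^2 r^3=(u^\theta)^2 r$ pointwise for $r>0$. Hence on any measurable meridional set $E\subset\{r>0\}\times\R_z$,
\[
\int_E F^2\,\dd\muFive=\int_E \frac{(u^\theta)^2}{r^2}\,r^3\,\dd r\dd z=\int_E |u^\theta|^2\,r\,\dd r\dd z=\int_E|u^\theta|^2\,\dd\muThree .
\]
This is the same manipulation as in (7.1), now restricted to $E$ instead of the whole half-plane. The axis $\{r=0\}$ has zero measure for both $\dd\muThree$ and $\dd\muFive$, so including or excluding it from $E$ changes nothing. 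Both integrands are nonnegative, so the identity holds in $[0,\infty]$ without any integrability assumption, by Tonelli.

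For the convergence statement, let $K$ be any compact set in meridional spacetime, meaning a compact subset of $\{r\ge0\}\times\R_z\times\R_t$. Apply (9.1) at each fixed time to the slice $K_t=\{(r,z):(r,z,t)\in K\}$, and integrate in $t$ (again by Tonelli). This gives
\[
\int\!\!\int_{K}|u^\theta_j|^2\,\dd\muThree\dd t=\int\!\!\int_K F_j^2\,\dd\muFive\dd t=\|F_j^2\|_{L^1(K,\dd\muFive\dd t)} .
\]
The left side is exactly $\|u^\theta_j\|_{L^2(K,\dd\muThree\dd t)}^2$. By hypothesis the right side tends to $0$. Since $K$ was arbitrary, $u^\theta_j\to0$ strongly in $L^2_{\rm loc}(\dd\muThree\dd t)$.

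The only point requiring care is the meaning of ``local.'' The statement is consistent only if the hypothesis $F_j^2\to0$ in local $L^1$ is understood on compact sets that may touch the axis. These are the same compact sets on which the conclusion is stated, i.e.\ the normalized lifted cylinders of Corollary~\ref{cor:source-decouple}. If instead the hypothesis were only on compacts avoiding $r=0$, the conclusion would hold on those sets alone. Since the identity is exact, either reading works with matching domains. I would state explicitly that the local topologies are taken over the same family of compact meridional spacetime sets. I do not expect any other obstacle: no regularity beyond measurability of $u^\theta_j$ is used.
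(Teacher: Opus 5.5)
Your proof is correct and is the paper's argument: the pointwise identity $|u^\theta|^2\,r = F^2\,r^3$ is integrated over $E$, and then over compact spacetime sets, which gives $\|u^\theta_j\|_{L^2}^2=\|F_j^2\|_{L^1}$ on each set. Your added remarks on Tonelli, the null axis, and using the same family of compact sets in hypothesis and conclusion only spell out details the paper calls ``immediate.''
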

\begin{proof}
Since $u^\theta=rF$,
\[
|u^\theta|^2\,\dd\muThree
=r^2F^2\,r\dd r\dd z
=F^2r^3\dd r\dd z
=F^2\,\dd\muFive.
\]
Integration gives (9.1), and the convergence statement is immediate.
\end{proof}

Suppose now that a suitable axisymmetric sequence is compact in the usual local velocity-pressure topology and that Lemma~\ref{lem:swirl-vanish} applies.  Any strong local velocity limit has zero azimuthal component.  Therefore the endpoint is an axisymmetric no-swirl suitable solution.  This is the only place where the classical no-swirl theory enters the reduction.  We use the standard fact that such solutions are regular; see \cite{Ladyzhenskaya1968,UkhovskiiYudovich1968,Leonardi1999}.  The present paper does not reprove that global theorem.

\section{The corridor reduction}
We can now state the exact logical content of the method.  The theorem deliberately separates proved analytic modules from the scale-uniform estimates still required to turn the reduction into unconditional regularity.

Consider a putative first singular point on the axis and a sequence of parabolically rescaled cylinders.  Let $(u_j,p_j)$ be the normalized suitable solutions, with associated $(\Gamma_j,F_j,G_j,U_j)$.  The standard local-energy compactness theory supplies subsequences once the normalized velocity and pressure quantities are controlled.  The corridor mechanism requires three additional conclusions on such a sequence.

\begin{enumerate}[label=(C\arabic*)]
\item \textbf{Circulation oscillation closure.}  The exact Caccioppoli identity (5.2), after the recovered-strain, exterior, pressure, and cutoff terms are estimated, yields
\[
\omega_j:=\|\Gamma_j\|_{L^\infty(Q_{3/4})}\longrightarrow0.
\]
More generally it is enough that the oscillation around the axis value tends to zero.
\item \textbf{Logarithmic source closure.}  If
\[
E_j=\int_{Q_{3/4}}\frac{|\partial_r\Gamma_j|^2}{r}\,\dd r\dd z\dd t,
\]
then
\[
\omega_j^2\left[1+\log\left(1+\frac{E_j}{\omega_j^2}\right)\right]\longrightarrow0.
\tag{9.1}
\]
This condition implies $F_j^2\to0$ locally in $L^1(\dd\muFive\dd t)$ by Lemma~\ref{lem:log-interp}.
\item \textbf{Endpoint compactness.}  The bounds required in Proposition~\ref{prop:G-compactness}, together with the corresponding local-energy/pressure compactness for $u_j$, hold on compact subcylinders, and the limiting no-swirl suitable solution lies in the classical local regularity class.
\end{enumerate}

\medskip
\noindent\textbf{Conditional axis-regularity reduction.}
\begin{theorem}\label{thm:conditional-corridor}
Let $u$ be a smooth finite-energy axisymmetric Navier--Stokes solution with swirl on $[0,T)$, and suppose $T$ is a first singular time.  Assume that every singularity-selected axis sequence admits normalized variables satisfying (C1)--(C3).  Then no singularity can occur at the axis.  Consequently, if the standard off-axis regularity reduction is also imposed, the solution extends smoothly past $T$.
\end{theorem}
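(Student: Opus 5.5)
The plan is a contradiction argument that assembles the proved modules along a blow-up sequence. Suppose a singular point $(0,z_0,T)$ lies on the axis. The singularity is selected by the standard procedure: choose points $(x_j,t_j)\to(0,z_0,T)$ and scales $\lambda_j\to0$ at which a scale-invariant local quantity fails the $\varepsilon$-regularity threshold. Then rescale by the Navier--Stokes scaling of Section 2. By (2.5)--(2.6) the normalized variables $(\Gamma_j,F_j,G_j,U_j)$ satisfy the same lifted system (2.1)--(2.4) on a fixed cylinder $Q_1$, because every term scales homogeneously. By hypothesis these normalized variables satisfy (C1)--(C3). The standard local-energy compactness then gives a subsequence with $u_j\to u_\infty$ strongly in $L^2_{\rm loc}$ and pressures converging weakly, so the limit $(u_\infty,p_\infty)$ is a suitable weak solution on $Q_1$.

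Next I will kill the swirl in the limit. The first ingredient is the axis value $\Gamma_j(0,z,t)=0$, which holds for smooth axisymmetric flow. Combining it with (C1), the hypotheses of Proposition~\ref{prop:integrated-log} hold on $Q_{3/4}$ with $\omega=\omega_j$. Condition (C2) is exactly (7.4), so $F_j^2\to0$ in local $L^1(\dd\muFive\dd t)$. Corollary~\ref{cor:source-decouple} then gives $\partial_z(F_j^2)\to0$ in $L^1_tH^{-m}_{\rm loc}$. Lemma~\ref{lem:swirl-vanish} gives $u_j^\theta\to0$ in $L^2_{\rm loc}$, so $u_\infty^\theta\equiv0$.

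For the meridional part, (C3) supplies the bounds (8.1) together with strong convergence of $b_j$. Proposition~\ref{prop:G-compactness} then yields $G_j\to G_\infty$ strongly, and $G_\infty$ solves the source-free equation. In the limit this identifies $u_\infty$ as an axisymmetric no-swirl suitable solution, whose vorticity quotient satisfies $\Dt G=\nu\Delta_5 G$.

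Now I reach the contradiction. By (C3) the limit lies in the classical local regularity class, so by the no-swirl theory cited in Section 9 it is smooth and bounded on $Q_{1/2}$. Its scale-invariant local quantity, such as $\rho^{-2}\int_{Q_\rho}|u_\infty|^3$, is therefore below the $\varepsilon$-regularity threshold for small $\rho$. Strong $L^3_{\rm loc}$ convergence of $u_j$ transfers this smallness to $u_j$ for large $j$. Upgrading $L^2$ to $L^3$ uses interpolation with the local energy bound and pressure compactness. Undoing the scaling, the original solution satisfies the Caffarelli--Kohn--Nirenberg smallness condition at the selected points. This contradicts how the sequence was chosen, so the axis carries no singular point. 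If off-axis regularity is imposed as well, the singular set at time $T$ is empty, the solution is bounded near $T$, and it extends smoothly past $T$.

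The main obstacle is the contradiction step: making sure the selection criterion is stable under strong local convergence. The cleanest route is to select the sequence with the quantity that the CKN or Lin $\varepsilon$-regularity theorem uses, namely $\rho^{-2}\int_{Q_\rho}(|u|^3+|p|^{3/2})$. This quantity is continuous under the compactness provided by (C3), which avoids needing pointwise control of the limit. The remaining checks are routine: that $\Gamma_j(0,\cdot)=0$ and the bound $|\Gamma_j|\le\omega_j$ supply the hypotheses of Proposition~\ref{prop:integrated-log} on $Q_{3/4}$.
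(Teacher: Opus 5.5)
Your proposal is correct and follows essentially the same route as the paper. The axis value $\Gamma_j(0,z,t)=0$ together with (C1)--(C2) feeds Proposition~\ref{prop:integrated-log} to give $F_j^2\to0$; Corollary~\ref{cor:source-decouple}, Proposition~\ref{prop:G-compactness} and Lemma~\ref{lem:swirl-vanish} then identify a no-swirl limit; and stability of the $\varepsilon$-regularity criterion contradicts the singular selection.

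The one claim to tighten is that $\rho^{-2}\int_{Q_\rho}|p_j|^{3/2}$ is continuous under the available compactness. Weak pressure convergence gives only lower semicontinuity. The usual fix, implicit in the paper's ``standard local stability,'' splits $p_j$ into a Calder\'on--Zygmund part, which converges strongly because $u_j$ converges strongly in $L^3$, and a part harmonic in $x$, whose contribution on $Q_\rho$ is $O(\rho)$ uniformly in $j$.
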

\begin{proof}
Assume for contradiction that an axis singularity occurs.  Select a normalized sequence for which (C1)--(C3) hold.  Since $\Gamma_j$ vanishes on the axis and its local oscillation tends to zero, (C1) makes the swirl circulation small on every fixed compact subcylinder.  Condition (C2), together with Lemma~\ref{lem:log-interp} integrated in $z$ and time, gives
\[
F_j^2\to0\quad\text{in }L^1_{\rm loc}(\dd\muFive\dd t).
\]
By Corollary~\ref{cor:source-decouple},
\[
\partial_z(F_j^2)\to0
\quad\text{in }L^1_tH^{-m}_{\rm loc}.
\]
Proposition~\ref{prop:G-compactness} and (C3) therefore give a subsequential limit $G_\infty$ satisfying the source-free lifted equation.  At the same time, (7.1) gives local strong disappearance of the swirl velocity in the kinetic-energy topology.  Hence the limiting suitable solution is axisymmetric with no swirl.

The no-swirl axisymmetric Navier--Stokes system is regular.  By (C3), the limiting profile is quantitatively smooth on a smaller cylinder.  Standard local stability of the suitable-solution regularity criterion then places the approximating normalized solutions below the corresponding $\varepsilon$-regularity threshold on an even smaller cylinder for all sufficiently large $j$.  This contradicts the singularity normalization used to select the sequence.  Therefore the assumed axis singularity cannot occur.
\end{proof}

\begin{remark}
The theorem is conditional because (C1)--(C3) are not consequences of finite energy proved in this paper.  The exact identities of Sections 2--8 show what must be controlled and eliminate several artificial singularities of coordinates; they do not supply the missing scale-uniform De Giorgi and endpoint estimates.
\end{remark}

\section{Why the naive quartic closure cannot work}
A natural corridor heuristic suggests the weighted quartic expression
\[
\left(\int |v|^4r^{4\alpha-1}\,\dd r\dd z\right)^{1/4}
\]
against the five-dimensional energy
\[
\left(\int|\nabla v|^2r^3\,\dd r\dd z\right)^{1/2}.
\]
The following scaling test rules out a scale-independent inequality of this form throughout the proposed corridor $3/4<\alpha<1$.

\medskip
\noindent\textbf{Scaling obstruction.}
\begin{proposition}\label{prop:quartic-obstruction}
There is no constant $C$ such that
\[
\left(\int |v|^4r^{4\alpha-1}\,\dd r\dd z\right)^{1/4}
\le C
\left(\int|\nabla v|^2r^3\,\dd r\dd z\right)^{1/2}
\tag{10.1}
\]
for all compactly supported smooth $v$ near the axis when $\alpha<5/4$.  In particular (10.1) cannot hold scale-independently for $3/4<\alpha<1$.
\end{proposition}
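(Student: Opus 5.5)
The plan is a pure scaling argument: I will show that the two sides of (10.1) have different homogeneity under the isotropic meridional dilation, and that the mismatch has a definite sign whenever $\alpha<5/4$. Fix a nonzero $v_0\in C_c^\infty$ of the meridional variables, supported in a box $\{1<r<2,\ |z|<1\}$ away from the axis. Then both integrals in (10.1) are finite and strictly positive for every $\alpha$, and no axis regularity issue arises. For $\lambda\ge1$ I will consider
\[
v_\lambda(r,z)=v_0(\lambda r,\lambda z),
\]
which is supported in $\{r<2/\lambda,\ |z|<1/\lambda\}$ and hence concentrates near the axis as $\lambda\to\infty$. This is exactly the regime ``near the axis'' in the statement.

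Next I compute the two sides by the change of variables $(r,z)=(s/\lambda,\zeta/\lambda)$, so that $\dd r\dd z=\lambda^{-2}\dd s\dd\zeta$. For the quartic side this gives
\[
\int|v_\lambda|^4r^{4\alpha-1}\,\dd r\dd z=\lambda^{-(4\alpha-1)-2}\int|v_0|^4s^{4\alpha-1}\,\dd s\dd\zeta ,
\]
so the left side of (10.1) equals $\lambda^{-\alpha-1/4}A$ with $A>0$ fixed. For the energy side, $\nabla v_\lambda=\lambda(\nabla v_0)(\lambda\,\cdot)$, which gives
\[
\int|\nabla v_\lambda|^2r^3\,\dd r\dd z=\lambda^{2-3-2}\int|\nabla v_0|^2s^3\,\dd s\dd\zeta .
\]
Hence the right side equals $C\lambda^{-3/2}B$ with $B>0$. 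Note that the $r^3$ weight is exactly the $\dd\muFive$ weight, consistent with the five-dimensional $\dot H^1$ scaling.

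If (10.1) held with some constant $C$, then for all $\lambda\ge1$ we would have $\lambda^{5/4-\alpha}A\le CB$. When $\alpha<5/4$ the exponent $5/4-\alpha$ is positive, so letting $\lambda\to\infty$ gives a contradiction. Since $(3/4,1)\subset(-\infty,5/4)$, the corridor statement follows immediately.

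There is no real obstacle here; the only points needing care are bookkeeping. First, the Jacobian factor $\lambda^{-2}$ must be applied to both integrals. Second, $v_0$ must be chosen so that both $A$ and $B$ are finite and nonzero, which is why its support is taken away from $r=0$. I would also remark that $\alpha=5/4$ is exactly the scale-invariant exponent, so the obstruction is sharp as a scaling statement. Whether (10.1) actually holds at $\alpha=5/4$ is a separate matter and is not claimed here.
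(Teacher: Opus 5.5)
Your proof is correct and is essentially the paper's argument. The paper uses $v_\varepsilon(r,z)=\phi(r/\varepsilon,z/\varepsilon)$ with $\phi\in C_c^\infty((0,2)\times(-1,1))$, which is your $v_\lambda$ with $\lambda=1/\varepsilon$, and it obtains the same ratio $\varepsilon^{\alpha-5/4}=\lambda^{5/4-\alpha}$, with its support likewise kept away from $r=0$ so that both weighted integrals are finite and positive.
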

\begin{proof}
Fix a nonzero $\phi\in C_c^\infty((0,2)\times(-1,1))$ and put
\[
v_\varepsilon(r,z)=\phi(r/\varepsilon,z/\varepsilon).
\]
With $r=\varepsilon s$ and $z=\varepsilon y$,
\[
\int|v_\varepsilon|^4r^{4\alpha-1}\,\dd r\dd z
=\varepsilon^{4\alpha+1}
\int|\phi|^4s^{4\alpha-1}\,\dd s\dd y.
\]
Thus the left side of (10.1) scales like $\varepsilon^{\alpha+1/4}$.  On the other hand,
\[
\int|\nabla v_\varepsilon|^2r^3\,\dd r\dd z
=\varepsilon^3
\int|\nabla\phi|^2s^3\,\dd s\dd y,
\]
so the right side scales like $\varepsilon^{3/2}$.  Their ratio is
\[
\varepsilon^{\alpha+1/4-3/2}
=\varepsilon^{\alpha-5/4}.
\]
If $\alpha<5/4$, this ratio diverges as $\varepsilon\downarrow0$.  No uniform constant $C$ can exist.
\end{proof}

This obstruction does not rule out every nonlinear estimate involving the same weights.  It shows that any successful corridor closure must use additional structure: the circulation maximum principle, the source derivative, time localization, a lower-order term, a scale-dependent coefficient, or a different norm.

\section{What is proved and what remains open}
The proved analytic part of the five-dimensional corridor can be summarized without a color or status notation:
\begin{enumerate}[label=(\roman*)]
\item the lifted equations (2.1)--(2.4) are exact;
\item the axis has zero capacity for the $H^1(\dd\muFive)$ energy;
\item the circulation measure supports the sharp elementary Hardy estimate (4.1);
\item the exact weighted drift divergence is $-2U$, leading to the Caccioppoli identity (5.2);
\item the five-dimensional recovery maps $G\in L^2$ to $U\in L^{10/3}$;
\item the swirl source has finite $L^1(\dd\muFive)$ mass and satisfies the logarithmic interpolation estimate (7.2);
\item vanishing source mass implies vanishing differentiated source in a negative Sobolev topology;
\item standard compactness then passes the $G$ equation to a source-free limit under the explicit transport bounds of Proposition~\ref{prop:G-compactness};
\item the naive scale-independent weighted quartic inequality fails throughout the proposed corridor.
\end{enumerate}

The unresolved mathematical step is not a numerical constant.  One must derive (C1)--(C3), or a different set of conclusions strong enough to replace them, from the actual Navier--Stokes dynamics near every putative axis singularity.  In particular, the stretching term in (5.2), harmonic/exterior recovery, pressure compatibility, and the logarithmic source factor must be controlled in one scale-uniform argument.  Until that is done, the five-dimensional corridor is a conditional reduction rather than an unconditional regularity theorem.

\end{document}